\newtheorem{thm}{Theorem}[section]
\newtheorem{cor}[thm]{Corollary}
\newtheorem{lem}[thm]{Lemma}
\newcommand{\R}{{\mathbb{R}}}
\newcommand{\Z}{{\mathbb{Z}}}
\newcommand{\1}{\partial}
\newcommand{\2}{\overline}
\newcommand{\3}{\varepsilon}
\newcommand{\4}{\widetilde}
\def\ni{\noindent}
\begin{document}
\title{Existence and asymptotic behaviour of solutions of the very fast 
diffusion equation} 
\author{Shu-Yu Hsu\\
Department of Mathematics\\
National Chung Cheng University\\
168 University Road, Min-Hsiung\\
Chia-Yi 621, Taiwan, R.O.C.\\
e-mail: syhsu@math.ccu.edu.tw}
\date{Sept 16, 2011}
\smallbreak \maketitle
\begin{abstract}
Let $n\ge 3$, $0<m\le (n-2)/n$, $p>\max(1,(1-m)n/2)$, and $0\le u_0
\in L_{loc}^p(\R^n)$ satisfy $\liminf_{R\to\infty}R^{-n+\frac{2}{1-m}}
\int_{|x|\le R}u_0\,dx=\infty$. We prove the existence of unique global 
classical solution of $u_t=\frac{n-1}{m}\Delta u^m$, $u>0$, in 
$\R^n\times (0,\infty)$, $u(x,0)=u_0(x)$ in $\R^n$. If in addition 
$0<m<(n-2)/n$ and $u_0(x)\approx A|x|^{-q}$ as $|x|\to\infty$ for some 
constants $A>0$, $q<n/p$, we prove that there exist 
constants $\alpha$, $\beta$, such that the function $v(x,t)
=t^{\alpha}u(t^{\beta}x,t)$ converges uniformly on every compact subset 
of $\R^n$ to the self-similar solution $\psi(x,1)$ of the equation with
$\psi(x,0)=A|x|^{-q}$ as $t\to\infty$. Note that when $m=(n-2)/(n+2)$, 
$n\ge 3$, if $g_{ij}=u^{\frac{4}{n+2}}\delta_{ij}$ is a metric on $\R^n$ 
that evolves by the Yamabe flow $\1 g_{ij}/\1 t=-Rg_{ij}$ with $u(x,0)
=u_0(x)$ in $\R^n$ where $R$ is the scalar curvature, then $u(x,t)$ 
is a global solution of the above fast diffusion equation.
\end{abstract}

\vskip 0.2truein

Key words: existence, global solution, very fast diffusion equation, 
asymptotic behaviour, Yamabe flow

AMS Mathematics Subject Classification: Primary 35K15, 35B40 Secondary 35K65,
58J35
\vskip 0.2truein
\setcounter{section}{0}

\section{Introduction}
\setcounter{equation}{0}
\setcounter{thm}{0}

Recently there is a lot of study of the equation \cite{A}, \cite{BBDGV}, 
\cite{DK}, \cite{DP}, \cite{DS1}, \cite{DS2}, \cite{Hs2}, \cite{P}, [V1--3],
\begin{equation}\label{fast-diff-eqn}
u_t=\frac{n-1}{m}\Delta u^m.
\end{equation}
The above equation arises in many physical models and in geometry. When 
$m>1$, \eqref{fast-diff-eqn} models the flow of gases through porous media
\cite{A}. When $m=1$, \eqref{fast-diff-eqn} is the heat equation after a 
rescaling. When $m=-1$, the equation \eqref{fast-diff-eqn} arises in
the model of heat conduction in solid hydrogen \cite{R}. As observed by
M.~Del Pino, M.~S\'aez, P.~Daskalopoulos and N.~Sesum \cite{PS}, \cite{DS2}, 
when $m=(n-2)/(n+2)$, $n\ge 3$, if $g_{ij}=u^{\frac{4}{n+2}}\delta_{ij}$ is a 
metric on $\R^n$ that evolves by the Yamabe flow 
\begin{equation}\label{Yamabe-eqn}
\frac{\1 }{\1 t}g_{ij}=-Rg_{ij}
\end{equation}
where $R$ is the scalar curvature, then $u$ satisfies \eqref{fast-diff-eqn}. 
We refer the readers to the book \cite{V3} by J.L.~Vazquez for an introduction
of the equation \eqref{fast-diff-eqn} and the books \cite{DK} by 
P.~Daskalopoulos and C.E.~Kenig, \cite{V2} by J.L.~Vazquez, for the most 
recent results on \eqref{fast-diff-eqn}. 

As observed by L.A.~Peletier in \cite{P} (Theorem 11.3 of \cite{P}) the 
behaviour of the solutions of \eqref{fast-diff-eqn} for $(n-2)/n<m<1$, 
$n\ge 3$, are very different from the behaviour of the solutions of 
\eqref{fast-diff-eqn} for $0<m<(n-2)/n$, $n\ge 3$. For any $0<m<1$, 
$n\ge 3$, let $0\le u_0\in L^1(\R^n)\cap L^p(\R^n)$ for some constant 
$p$ satisfying
\begin{equation}\label{p-cond}
p>\max(1,(1-m)n/2).
\end{equation} 
By Theorem 11.3 of \cite{P} for any $0<m<(n-2)/n$ and $n\ge 3$ there exists 
a constant $T>0$ and a distribution solution $u\in C([0,T];L^1(\R^n))
\cap L^{\infty}([\delta,T]\times \R^n)$ for any $\delta\in (0,T]$ of 
\eqref{fast-diff-eqn} with $u(x,0)=u_0(x)$ on $\R^n$
which vanishes identically at time $T$. For example for any $k>0$ and $T>0$, 
the Barenblatt solution (\cite{DS2}),
\begin{equation}\label{Barenblatt}
B_k(x,t)=\left(\frac{C_{\ast}}{k+(T-t)_+^{\frac{2}{n-2-nm}}|x|^2}
\right)^{\frac{1}{1-m}}(T-t)_+^{\frac{n}{n-2-nm}},
\end{equation}
where $C_{\ast}=2(n-1)(n-2-nm)/(1-m)$ is a classical solution of 
\eqref{fast-diff-eqn} in $\R^n\times (0,T)$ which vanishes identically 
at time $T$. On the other hand an examination of the proof of \cite{HP} 
shows that the existence proof of global solutions of
\begin{equation}\label{fast-diff-IVP}
\left\{\begin{aligned}
u_t=&\frac{n-1}{m}\Delta u^m,u>0,\quad\mbox{ in }\R^n\times (0,T)\\
u(x,0)=&u_0(x)\qquad\qquad\qquad\mbox{ in }\R^n
\end{aligned}\right.
\end{equation} 
is valid only for $(n-2)/n<m<1$ and $n\ge 3$. Then by the result of 
\cite{HP} for $n\ge 3$, $(n-2)/n<m<1$, and any $0\le u_0\in L_{loc}^1
(\R^n)$, $u_0\not\equiv 0$,  there exists a unique global 
solution of \eqref{fast-diff-IVP} in $\R^n\times (0,\infty)$. 

In this paper we will prove that if $n\ge 3$, $0<m\le (n-2)/n$, and
$0\le u_0\in L_{loc}^p(\R^n)$ for some constant $p$ satisfying 
\eqref{p-cond} and 
\begin{equation}\label{initial-value-average-infty}
\liminf_{R\to\infty}\frac{1}{R^{n-\frac{2}{1-m}}}\int_{|x|\le R}u_0\,dx
=\infty,
\end{equation}
then \eqref{fast-diff-IVP} has a global solution $u$ in 
$\R^n\times (0,\infty)$. When $u_0$ is radially symmetric, the 
condition \eqref{initial-value-average-infty} is also shown in
\cite{DP} to imply global existence of solution of 
\eqref{fast-diff-IVP} for $m<0$. On the other hand when $m>1$,
by the result of Aronson and Caffarelli \cite{AC}, if \eqref{fast-diff-IVP}  
has a global solution in $\R^n\times (0,\infty)$, then
\begin{equation*}
\limsup_{R\to\infty}\frac{1}{R^{n-\frac{2}{1-m}}}\int_{|x|\le R}u_0\,dx
=0.
\end{equation*}
Thus our result extends the result of \cite{AC} and \cite{DP}. Let
\begin{equation}\label{alpha-beta-defn}
\alpha=\frac{q}{2-q(1-m)}\quad\mbox{ and }\quad\beta=\frac{1}{2-q(1-m)}. 
\end{equation}
We will also prove that when $u_0(x)\approx A|x|^{-q}$ for some constants 
$A>0$, $q<n/p$, as $|x|\to\infty$ where $p$ satisfies \eqref{p-cond}, 
then under some mild condition on $u_0$ the rescaled function 
\begin{equation}\label{rescaled-soln}
v(x,t)=t^{\alpha}u(t^{\beta}x,t)
\end{equation}
converges uniformly on every compact subset of $\R^n$ to the 
self-similar solution $\psi(x,1)$ of \eqref{fast-diff-eqn} with
$\psi (x,0)=A|x|^{-q}$ as $t\to\infty$. The function $\4{v}(x)=\psi (x,1)$ 
is radially symmetric and satisfies the elliptic equation
\begin{equation}\label{elliptic}
\frac{n-1}{m}\Delta\4{v}^m+\alpha\4{v}+\beta x\cdot\nabla\4{v}=0, \4{v}>0,
\quad\mbox{ in }\R^n.
\end{equation}
The main results we obtain in this paper are the following.

\begin{thm}\label{existence-finite-time}
Let $n\ge 3$, $0<m\le (n-2)/n$, and let $p$ satisfy \eqref{p-cond}. 
Then there exists a constant $C_1>0$ such that if $0\le u_0\in 
L_{loc}^p(\R^n)$ satisfies 
\begin{equation}\label{initial-value-average-lower-bd-0}
\liminf_{R\to\infty}\frac{1}{R^{n-\frac{2}{1-m}}}\int_{|x|\le R}u_0\,dx
\ge C_1T^{\frac{1}{1-m}}
\end{equation} 
for some constant $T>0$, then there exists a unique positive solution 
$u\in C^{\infty}(\R^n\times (0,\infty))$ of 
\eqref{fast-diff-IVP} in $\R^n\times (0,T)$ which satisfy 
\begin{equation}\label{aronson-benilan}
u_t\le\frac{u}{(1-m)t}
\end{equation}
in $\R^n\times (0,T)$.
\end{thm}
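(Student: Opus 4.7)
The plan is to construct $u$ as a limit of classical solutions of approximate Dirichlet problems on expanding balls $B_R$ with strictly positive initial and boundary data, exploiting \eqref{initial-value-average-lower-bd-0} through comparison with Barenblatt sub-solutions to keep the approximations bounded below on $[0,T)$. Concretely, for each $R>0$ and $\3>0$ let $u_{0,\3,R}\ge\3$ be a smooth, bounded, compactly supported mollification of $u_0\chi_{B_R}+\3$, and solve the Dirichlet problem for \eqref{fast-diff-eqn} on $B_R\times(0,T)$ with lateral value $\3$ and initial datum $u_{0,\3,R}$. By the maximum principle $u_{\3,R}\ge\3$, so the equation is uniformly parabolic and classical theory yields a unique smooth solution $u_{\3,R}$.

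On every compact cylinder $K\times[\tau,T-\tau]\subset\R^n\times(0,T)$, three uniform-in-$(\3,R)$ estimates are needed. (i) A local $L^p_{\mathrm{loc}}$-to-$L^\infty$ smoothing effect of Herrero-Pierre / B\'enilan-Crandall type, whose hypothesis is exactly \eqref{p-cond}, supplies the upper bound $\|u_{\3,R}(t)\|_{L^\infty(K)}\le C(K,\tau,T)$. (ii) The Aronson-B\'enilan estimate \eqref{aronson-benilan} is established on the smooth approximations by applying the maximum principle to $w=u_t-u/((1-m)t)$, and then passes to the limit. (iii) A strictly positive lower bound is obtained by constructing, from the Barenblatt family \eqref{Barenblatt}, a sub-solution of extinction time at least $T$ that lies below $u_{0,\3,R}$; comparison on $B_R$ then propagates the inequality. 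The constant $C_1$ is chosen large enough that hypothesis \eqref{initial-value-average-lower-bd-0} forces the initial average of $u_0$ on large balls to dominate that of such a sub-solution.

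Once the two-sided bounds are in place, the equation is uniformly parabolic on compacts, so interior Schauder estimates yield uniform $C^{2+\alpha,1+\alpha/2}$ regularity; bootstrapping gives $C^\infty$ bounds. A diagonal Arzel\`a-Ascoli extraction produces a smooth positive limit $u$ on $\R^n\times(0,T)$ solving \eqref{fast-diff-IVP} and inheriting \eqref{aronson-benilan}. Uniqueness in the class of positive classical solutions satisfying \eqref{aronson-benilan} follows from a Dahlberg-Kenig-type comparison argument, the estimate \eqref{aronson-benilan} providing the temporal growth control needed to make the comparison principle applicable on the unbounded domain.

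The main obstacle is step (iii): translating the averaged lower bound \eqref{initial-value-average-lower-bd-0} into a pointwise lower comparison with a Barenblatt of extinction time $\ge T$. The spatial decay $|x|^{-2/(1-m)}$ of \eqref{Barenblatt} matches exactly the growth exponent $n-2/(1-m)$ appearing in \eqref{initial-value-average-lower-bd-0}, which is why the criterion is sharp and why the argument isolates a specific admissible $C_1$; this calibration, rather than the smoothing effect or the Aronson-B\'enilan inequality (both of which are by now standard tools), is where the technical work of the proof resides.
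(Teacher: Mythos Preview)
Your overall architecture (approximate Dirichlet problems, $L^p\to L^\infty$ smoothing for the upper bound, Aronson--B\'enilan, Schauder, Arzel\`a--Ascoli) matches the paper's, but step~(iii) has a real gap. A Barenblatt comparison is a \emph{pointwise} sub-solution argument: to place $B_k(\cdot,0)$ below the initial datum you need $u_0(x)\ge B_k(x,0)$ for every $x$, and the hypothesis \eqref{initial-value-average-lower-bd-0} gives only an integral average. Since $u_0\in L^p_{loc}$ may vanish on sets of positive measure (nothing in the assumptions excludes this), no Barenblatt of fixed extinction time $\ge T$ can be fit below $u_0$ pointwise; the $\3$-shift does not help because any lower bound obtained from it degenerates as $\3\to 0$. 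Your remark that ``the initial average of $u_0$ dominates that of such a sub-solution'' is not enough: the comparison principle does not transfer ordering of averages to ordering of solutions.

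The paper closes this gap with a different and essential tool: the local positivity estimates of Bonforte--Vazquez, inequalities (1.18) and (1.28) of \cite{BV}, applied to auxiliary Dirichlet solutions $w_R$ on $B_{5R}$ with zero lateral data and truncated initial data $u_0\chi_{B_{2R}}$ (Lemma~\ref{positive-lem}). Estimate (1.18) bounds the extinction time of $w_R$ from below in terms of $\int_{B_R(x_0)}u_0\,dx$, and the Harnack-type estimate (1.28) then converts that same integral into a quantitative \emph{pointwise} lower bound for $w_R$ on an inner ball $\overline{B_{\delta R}}\times[t_1,T]$. The constant $C_1$ is determined by the constants $C_2,C_3$ in these two Bonforte--Vazquez inequalities, not by a Barenblatt calibration. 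The solution is then obtained by squeezing between $w_{R}$ (below) and the solution $u_R$ of the $\infty$-boundary problem from Theorem~\ref{existence-infinity-bd-problem} (above), both of which converge to $u$. So the missing ingredient in your outline is precisely this integral-to-pointwise positivity mechanism for very fast diffusion; the Barenblatt family by itself cannot supply it.
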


\begin{thm}\label{existence-thm}
Let $n\ge 3$, $0<m\le (n-2)/n$, and let $p$ satisfy \eqref{p-cond}. Suppose 
$0\le u_0\in L_{loc}^p(\R^n)$ satisfies \eqref{initial-value-average-infty}. 
Then \eqref{fast-diff-IVP} has a unique global positive solution $u\in C^{\infty}
(\R^n\times (0,\infty))$ which satisfies \eqref{aronson-benilan} in 
$\R^n\times (0,\infty)$.
\end{thm}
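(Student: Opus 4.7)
The plan is to deduce Theorem \ref{existence-thm} directly from Theorem \ref{existence-finite-time} by letting $T\to\infty$ and patching the resulting finite-time solutions via uniqueness. The key observation is that the hypothesis \eqref{initial-value-average-infty} is strictly stronger than \eqref{initial-value-average-lower-bd-0} for every finite $T$, so no additional analysis should be needed beyond invoking the previous theorem on an exhausting family of time intervals.

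First I would fix an arbitrary $T>0$ and note that \eqref{initial-value-average-infty} immediately implies
\[
\liminf_{R\to\infty}\frac{1}{R^{n-\frac{2}{1-m}}}\int_{|x|\le R}u_0\,dx=\infty\;\ge\;C_1 T^{\frac{1}{1-m}},
\]
so the hypothesis \eqref{initial-value-average-lower-bd-0} of Theorem \ref{existence-finite-time} holds for this $T$. Applying that theorem yields a unique positive $u^{(T)}\in C^{\infty}(\R^n\times(0,T))$ solving \eqref{fast-diff-IVP} on $\R^n\times(0,T)$ and satisfying the Aronson--B\'enilan estimate \eqref{aronson-benilan} there.

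Next I would patch. Given $0<T_1<T_2$, both $u^{(T_2)}\!\restriction_{\R^n\times(0,T_1)}$ and $u^{(T_1)}$ solve \eqref{fast-diff-IVP} on $\R^n\times(0,T_1)$ with the same initial data $u_0$, so by the uniqueness assertion of Theorem \ref{existence-finite-time} they coincide. Hence $u(x,t):=u^{(T)}(x,t)$ for any $T>t$ is well defined on $\R^n\times(0,\infty)$, lies in $C^{\infty}(\R^n\times(0,\infty))$, is strictly positive, and solves \eqref{fast-diff-IVP} globally. The inequality \eqref{aronson-benilan} is a pointwise statement that holds on each $\R^n\times(0,T)$ and therefore on all of $\R^n\times(0,\infty)$.

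Finally, global uniqueness follows from the same argument in reverse: if $\tilde u$ is any other global positive $C^{\infty}$ solution of \eqref{fast-diff-IVP} with the same initial data, then for each fixed $T>0$ the restriction $\tilde u\!\restriction_{\R^n\times(0,T)}$ coincides with $u^{(T)}=u$ by the uniqueness clause of Theorem \ref{existence-finite-time}, hence $\tilde u\equiv u$ on $\R^n\times(0,\infty)$. There is no real obstacle here beyond having Theorem \ref{existence-finite-time} in hand; the whole content of Theorem \ref{existence-thm} is that the finite-time lower bound \eqref{initial-value-average-lower-bd-0} can be made to hold for every $T$ simultaneously, which is exactly what \eqref{initial-value-average-infty} provides.
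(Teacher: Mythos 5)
Your proposal is correct and follows essentially the same route as the paper, which simply notes that \eqref{initial-value-average-infty} makes \eqref{initial-value-average-lower-bd-0} hold for every $T>0$ and that the global result therefore ``follows by the proof of Theorem~\ref{existence-finite-time} and Theorem~\ref{existence-finite-time-2}.'' The only cosmetic difference is that you treat Theorem~\ref{existence-finite-time} as a black box for each $T$ and patch the resulting $u^{(T)}$ via the uniqueness clause, whereas the paper re-examines the single construction $u=\lim_R u_R$ (which is already defined for all time) and lets $T\to\infty$ in the positivity argument; both are correct and equivalent.
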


\begin{thm}\label{asymptotic-thm}
Let $n\ge 3$, $0<m<(n-2)/n$, and $q<n/p$ for some constant $p$ satisfying 
\eqref{p-cond}. Suppose $0\le u_0\in L_{loc}^p(\R^n)$ satisfies 
$u_0=\2{u}_0+\phi$ where $0\le\2{u}_0\in L_{loc}^p(\R^n)$ and $\phi\in 
L^1(\R^n)\cap L^p(\R^n)$ such that 
\begin{equation}\label{u0-bar-infty}
\lim_{|x|\to\infty}|x|^q\2{u}_0(x)=A
\end{equation}
for some constant $A>0$. Let $\alpha$ and $\beta$ be given by 
\eqref{alpha-beta-defn}. Let $u$ be the unique global solution of
\eqref{fast-diff-IVP} in $\R^n\times (0,\infty)$ given by 
Corollary~\ref{global-existence-cor} which satisfies 
\eqref{aronson-benilan} in $\R^n\times (0,\infty)$. Let $v$ be given 
by \eqref{rescaled-soln}.
Then the rescaled function $v$ converges uniformly on every 
compact subset of $\R^n$ to the unique radially symmetric self-similar 
solution $\psi(x,1)$ of \eqref{fast-diff-eqn} with $\psi(x,0)=A|x|^{-q}$
as $t\to\infty$.
\end{thm}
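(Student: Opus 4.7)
The strategy is to exploit the scaling invariance of equation \eqref{fast-diff-eqn}: for every $\lambda>0$ the rescaled function $u_\lambda(x,t):=\lambda^\alpha u(\lambda^\beta x,\lambda t)$ is again a solution of \eqref{fast-diff-eqn} in $\R^n\times(0,\infty)$. The exponents in \eqref{alpha-beta-defn} satisfy $\alpha=q\beta$, which is precisely the relation that renders the tail profile $A|x|^{-q}$ invariant under this rescaling and produces the self-similar ansatz $\psi(x,t)=t^{-\alpha}\4{v}(t^{-\beta}x)$. Since $v(x,t)=u_\lambda(x,1)$ with $\lambda=t$, the conclusion is equivalent to $u_\lambda(\cdot,1)\to\4{v}$ uniformly on compact subsets of $\R^n$ as $\lambda\to\infty$.

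First I would analyze the rescaled data $u_{0,\lambda}(x)=\lambda^\alpha u_0(\lambda^\beta x)=\lambda^\alpha\2{u}_0(\lambda^\beta x)+\lambda^\alpha\phi(\lambda^\beta x)$. Hypothesis \eqref{u0-bar-infty} together with $\alpha=q\beta$ gives $\lambda^\alpha\2{u}_0(\lambda^\beta x)\to A|x|^{-q}$ pointwise for $x\ne 0$. The assumption $q<n/p$ reads $\alpha<n\beta/p$, so the change of variables $y=\lambda^\beta x$ yields $\|\lambda^\alpha\phi(\lambda^\beta\cdot)\|_{L^p(\R^n)}=\lambda^{\alpha-n\beta/p}\|\phi\|_{L^p}\to 0$, and similarly in $L^1_{loc}(\R^n\setminus\{0\})$, so the $\phi$-piece is negligible in the limit. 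Moreover $A|x|^{-q}$ satisfies the integral lower bound \eqref{initial-value-average-infty}, and this is inherited by $u_{0,\lambda}$ uniformly in $\lambda\ge 1$.

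Next I would establish $C^\infty_{loc}(\R^n\times(0,\infty))$ compactness of $\{u_\lambda\}_{\lambda\ge 1}$. Uniform upper bounds on compact sets follow by applying to each $u_\lambda$ the a priori estimates underlying Theorem~\ref{existence-thm}, and the Aronson--B\'enilan inequality \eqref{aronson-benilan} is preserved under the rescaling. The crucial and more delicate ingredient is a uniform positive lower bound on compact sets, which would be obtained via a Barenblatt-type subsolution built from the rescaled integral mass supplied by \eqref{initial-value-average-infty}. With two-sided bounds in hand, \eqref{fast-diff-eqn} is uniformly parabolic on each compact set and Schauder theory yields the desired precompactness. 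Extracting a subsequential limit $u_{\lambda_k}\to U$, the function $U$ solves \eqref{fast-diff-eqn} on $\R^n\times(0,\infty)$.

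The final step is to identify the initial trace of $U$. Using a Barenblatt barrier to control $u_\lambda$ uniformly as $t\to 0^+$ and the convergence of initial data established above, one sees that $U(\cdot,0)=A|x|^{-q}$ in the appropriate weak sense. Uniqueness for the Cauchy problem \eqref{fast-diff-IVP} with datum $A|x|^{-q}$ within the class of solutions satisfying \eqref{aronson-benilan} then forces $U=\psi$, and since every subsequential limit agrees with $\psi$ the whole family $u_\lambda(\cdot,1)$ converges to $\4{v}$. The main obstacles are: (i) producing the uniform positive lower bound on $u_\lambda$ on compact subsets of $\R^n\times(0,\infty)$ so that the equation remains non-degenerate; and (ii) pinning down the initial trace of $U$ despite the $|x|^{-q}$ singularity at the origin and the need to show that $\phi$ contributes nothing in the limit. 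Both are the technical heart of the argument and rely on the scale-invariant integral estimate \eqref{initial-value-average-infty} together with Barenblatt-type comparisons.
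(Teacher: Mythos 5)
Your high-level plan coincides with the paper's: you set $u_\gamma(x,t)=\gamma^q u(\gamma x,\gamma^{1/\beta}t)$ (your $u_\lambda$ with $\gamma=\lambda^\beta$), establish compactness of the family, identify any subsequential limit as the self-similar solution with datum $A|x|^{-q}$ via uniqueness (Theorem~\ref{existence-self-similar-soln}), and read off the conclusion from $v(x,t)=u_{t^\beta}(x,1)$. The treatment of the rescaled data $\2{u}_{0,\gamma}\to A|x|^{-q}$ in $L^1_{loc}$ and $\phi_\gamma\to 0$ also matches. However, there is a genuine gap in your last step, the identification of the initial trace of the limit $U$. You propose to ``use a Barenblatt barrier to control $u_\lambda$ uniformly as $t\to 0^+$'', but this does not produce the needed estimate: in the range $0<m<(n-2)/n$ the Barenblatt profile \eqref{Barenblatt} is an extinguishing \emph{sub}solution, so it only gives lower bounds, and there is no companion Barenblatt-type supersolution to pinch $u_\gamma$ near $t=0$; moreover a pointwise barrier does not yield the $L^1_{loc}$ convergence $\|U(\cdot,t)-A|x|^{-q}\|_{L^1(B_R)}\to 0$ needed to invoke uniqueness. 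What the paper actually uses is the $L^1$-contraction estimate coming from Kato's inequality (as in Herrero--Pierre and Lemma~2.3 of \cite{Hu2}): for some $C$,
\begin{equation*}
\int_{B_R}|u_{\gamma_i}-u_{\gamma_j}|(x,t)\,dx\le C\Bigl(\int_{B_{2R}}|u_{0,\gamma_i}-u_{0,\gamma_j}|\,dx+R^{n-\frac{2}{1-m}}t^{\frac{1}{1-m}}\Bigr),
\end{equation*}
which after passing to the limit in $i$ and combining with the convergence of the rescaled data gives a quantitative modulus of continuity at $t=0$, uniform in $\gamma$. This is the mechanism that pins the initial trace, and it is missing from your argument.

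A secondary, less serious imprecision: the uniform positivity lower bound is not obtained from a Barenblatt subsolution either. Since such subsolutions vanish in finite time, the paper instead applies the Bonforte--Vazquez Harnack/positivity estimates \eqref{Harnack} to weak solutions of auxiliary Dirichlet problems with compactly supported initial data, together with a lower bound on their extinction times derived from the scaled integral mass (this is the content of Lemma~\ref{positive-lem} and estimates \eqref{u0-integral-lower-bd2}--\eqref{u-gamma-lower-bd}). Your intuition that ``mass on balls forces positivity'' is right, but the concrete vehicle is the extinction-time/Harnack machinery, not a direct barrier.
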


\begin{cor}\label{Yamabe-cor}
Let $n\ge 3$ and $m=(n-2)/(n+2)$. Let $p$, $q$, $\alpha$, $\beta$, $u_0$, 
$v$, be as in Theorem~\ref{asymptotic-thm}. Suppose the metric $g_{ij}
=u^{4/(n+2)}\delta_{ij}$ evolves by the Yamabe flow \eqref{Yamabe-eqn}
with $u(x,0)=u_0(x)$ on $\R^n$. Then $v(x,t)$ converges uniformly on 
every compact subset of $\R^n$ to the unique radially symmetric 
self-similar solution $\psi(x,1)$ of \eqref{fast-diff-eqn} with 
$\psi(x,0)=A|x|^{-q}$ as $t\to\infty$.
\end{cor}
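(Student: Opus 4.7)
The plan is to reduce the corollary to Theorem \ref{asymptotic-thm} by showing that the evolution of the conformal factor $u$ under the Yamabe flow is governed precisely by \eqref{fast-diff-eqn} with $m=(n-2)/(n+2)$. First I would verify the admissibility of the exponent: for $n\ge 3$ one checks that $(n-2)/(n+2)<(n-2)/n$, so $m=(n-2)/(n+2)$ lies in the range $0<m<(n-2)/n$ required by Theorem~\ref{asymptotic-thm}.

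Next I would carry out the standard conformal computation. Writing $g_{ij}=u^{4/(n+2)}\delta_{ij}=\phi^{4/(n-2)}\delta_{ij}$ with $\phi=u^{(n-2)/(n+2)}$, the well-known formula for the scalar curvature of a conformally flat metric gives
\begin{equation*}
R=-\frac{4(n-1)}{n-2}\phi^{-(n+2)/(n-2)}\Delta\phi
=-\frac{4(n-1)}{n-2}u^{-1}\Delta u^{(n-2)/(n+2)}.
\end{equation*}
On the other hand, differentiating the metric $g_{ij}=u^{4/(n+2)}\delta_{ij}$ in $t$ and comparing with the flow equation $\partial_t g_{ij}=-R g_{ij}$ yields $u_t=-\tfrac{n+2}{4}Ru$. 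Combining these two relations collapses to
\begin{equation*}
u_t=\frac{(n+2)(n-1)}{n-2}\Delta u^{(n-2)/(n+2)}=\frac{n-1}{m}\Delta u^m,
\end{equation*}
which is exactly \eqref{fast-diff-eqn}.

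At this point the function $u$ arising from the Yamabe flow with initial datum $u_0$ satisfies the initial value problem \eqref{fast-diff-IVP}. Since $u_0$ meets the hypotheses of Theorem~\ref{asymptotic-thm} (and hence of the existence result giving Corollary~\ref{global-existence-cor}), the uniqueness part of Theorem~\ref{existence-thm} forces $u$ to coincide with the unique global positive classical solution produced there. Applying Theorem~\ref{asymptotic-thm} directly to this $u$ yields the claimed convergence of $v(x,t)=t^{\alpha}u(t^{\beta}x,t)$ to $\psi(x,1)$ uniformly on compact subsets of $\R^n$.

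The only potentially delicate step is justifying that the solution produced by the Yamabe flow is genuinely the classical fast diffusion solution provided by Theorem~\ref{existence-thm}: one needs the Yamabe flow solution to be positive and sufficiently regular so that uniqueness in the class constructed earlier applies. Given that Theorem~\ref{existence-thm} produces a smooth positive solution for any admissible initial datum and that the conformal computation above is pointwise equivalent for $u>0$, the identification is immediate and no genuine new analytic work is required — the corollary is essentially a reinterpretation of Theorem~\ref{asymptotic-thm} in geometric language.
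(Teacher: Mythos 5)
Your proof is correct and takes the only sensible route — the same one the paper implicitly relies on, since the paper gives no explicit proof and simply presents the corollary as an immediate consequence of Theorem~\ref{asymptotic-thm} together with the observation (attributed in the introduction to \cite{PS}, \cite{DS2}) that the Yamabe flow on a conformally flat metric $g_{ij}=u^{4/(n+2)}\delta_{ij}$ is equivalent to the fast diffusion equation with $m=(n-2)/(n+2)$. You supply the conformal computation that the paper leaves to the references; your verification that $\frac{n-2}{n+2}<\frac{n-2}{n}$, your derivation of $R=-\frac{4(n-1)}{n-2}u^{-1}\Delta u^m$ and $u_t=-\frac{n+2}{4}Ru$, and the resulting identity $\frac{n-1}{m}=\frac{(n+2)(n-1)}{n-2}$ are all correct, and the appeal to uniqueness from Theorem~\ref{existence-thm} to identify the Yamabe-flow solution with the one in Corollary~\ref{global-existence-cor} is exactly the right way to close the argument.
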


Note that the Barenblatt solution $B_k(x,t)$ given by \eqref{Barenblatt}
satisfies 
$$
\lim_{R\to\infty}\frac{1}{R^{n-\frac{2}{1-m}}}\int_{|x|\le R}B_k(x,0)\,dx
=\frac{\omega_nC_{\ast}^{\frac{1}{1-m}}T^{\frac{1}{1-m}}}{n-2(1-m)^{-1}}
$$
where $\omega_n$ is the surface area of the unit sphere $S^{n-1}$ in 
$\R^n$ and it vanishes in a finite time $T$. Hence 
Theorem~\ref{existence-finite-time} is sharp.

The plan of the paper is as follows. In section two we will prove the 
existence of global solutions of \eqref{fast-diff-IVP}. In section 
three we will prove the asymptotic large time behaviour of the global
solution of \eqref{fast-diff-IVP}.

We first start will some definitions (cf. \cite{Hu2}). For any $x_0\in\R^n$, 
$R>0$, $T>0$, let $B_R(x_0)=\{x\in\R^n:|x-x_0|<R\}$, $B_R=B_R(0)$, $Q_R(x_0)
=B_R(x_0)\times (0,\infty)$, $Q_R=Q_R(0)$, $Q_R^T(x_0)=B_R(x_0)\times 
(0,T)$ and $Q_R^T=Q_R^T(0)$. For any set $A\subset\R^n$, we let 
$\chi_A$ be the characteristic function of the set $A$. 
For any domain $\Omega\subset\R^n$, $T>0$, $0<m<1$, we say that $u$ is 
a solution (subsolution, supersolution) of \eqref{fast-diff-eqn} in 
$\Omega\times (0,T)$ if $u>0$ in $\Omega\times (0,T)$ and is a classical 
solution (subsolution, supersolution) of \eqref{fast-diff-eqn}
in $\Omega\times (0,T)$. For any $0\le u_0\in L_{loc}^1(\Omega)$
we say that a solution $u$ of \eqref{fast-diff-eqn} in $\Omega\times 
(0,T)$ has initial value $u_0$ if $\|u(\cdot,t)-u_0\|_{L^1(K)}\to 0$
as $t\to 0$ for any compact subset $K\subset\Omega$.  

For any bounded smooth domain $\Omega\subset\R^n$, 
$0\le u_0\in L_{loc}^1(\Omega)$, and $0\le g\in 
L^1(\1\Omega\times (0,T))$, we say that $u$ is a 
solution of the Dirichlet problem
\begin{equation}\label{Dirichlet-problem-g}
\left\{\begin{aligned}
u_t=&\frac{n-1}{m}\Delta u^m\quad\mbox{ in }\Omega\times (0,T)\\
u=&g\quad\qquad\quad\,\,\,\mbox{ on }\1\Omega\times (0,T)\\
u(x,0)=&u_0(x)\qquad\quad\mbox{ in }\Omega
\end{aligned}\right.
\end{equation}
if $u$ is a positive classical solution of \eqref{fast-diff-eqn} in 
$\Omega\times (0,T)$ with initial value $u_0$ and satisfies
\begin{equation}\label{integral-eqn}
\int_{t_1}^{t_2}\int_{\Omega}\left(\frac{n-1}{m}u^m\Delta\eta+u\eta_t\right)
\,dx\,ds=\frac{n-1}{m}\int_{t_1}^{t_2}\int_{\1\Omega}g^m\frac{\1\eta}{\1 n}
\,d\sigma dt+\int_{\Omega}u(x,t_2)\,dx-\int_{\Omega}u(x,t_1)\,dx
\end{equation}
for any $0<t_1<t_2<T$ and $\eta\in C^2(\2{\Omega}\times (0,T))$
satisfying $\eta=0$ on $\1\Omega\times (0,T)$ where $\1/\1 n$ is the 
exterior normal derivative on $\1\Omega$. 
We say that $u$ is a weak solution of the Dirichlet problem 
\eqref{Dirichlet-problem-g} if $0\le u\in C([0,T_R);L^1(\Omega))$ 
satisfies \eqref{integral-eqn} for any $0<t_1<t_2<T$ and 
$\eta\in C^2(\2{\Omega}\times (0,T))$ satisfying $\eta=0$ on 
$\1\Omega\times (0,T)$ and $u$ has initial value $u_0$. 

We say that $u$ is a solution of the Dirichlet problem
\begin{equation}\label{infinity-bd-problem}
\left\{\begin{aligned}
u_t=&\frac{n-1}{m}\Delta u^m, u>0,\quad\mbox{ in }
\Omega\times (0,\infty)\\
u(x,t)=&\infty\qquad\qquad\qquad\quad\,\,\mbox{ on }
\1\Omega\times (0,\infty)\\
u(x,0)=&u_0(x)\qquad\qquad\qquad\mbox{ in }\Omega
\end{aligned}\right.
\end{equation}
if $u$ is a positive classical solution of \eqref{fast-diff-eqn} in 
$\Omega\times (0,\infty)$ with initial value $u_0$ and 
$$
\lim_{\substack{(y,s)\to (x,t)\\(y,s)\in\Omega\times (0,\infty)}}u(y,s)
=\infty\quad\forall (x,t)\in\1\Omega\times (0,\infty)
$$
We say that $u$ is a solution (subsolution, supersolution) of 
\eqref{fast-diff-eqn} in $\R^n\times (0,T)$ if $u>0$ in $\R^n\times (0,T)$ 
and is a classical solution (subsolution, supersolution) of 
\eqref{fast-diff-eqn} in $\R^n\times (0,T)$. For any $0\le u_0\in 
L_{loc}^1(\R^n)$ we say that a solution $u$ of \eqref{fast-diff-eqn} in 
$\R^n\times (0,T)$ has initial value $u_0$ if $\|u(\cdot,t)-u_0\|_{L^1(K)}
\to 0$ as $t\to 0$ for any compact subset $K\subset\R^n$. 

We will assume that $n\ge 3$, $0<m\le (n-2)/n$, and \eqref{p-cond} hold 
for the rest of the paper.

\section{Existence of global solutions}
\setcounter{equation}{0}
\setcounter{thm}{0}

In this section we will prove the existence of global
solutions of \eqref{fast-diff-IVP}. We first extend some results of \cite{Hu2}.
We first observe that by \eqref{p-cond},
$$
\frac{2}{n}+\frac{\alpha+m}{\alpha+1}
\ge\frac{2}{n}+\frac{p-1+m}{p}>1\quad\forall\alpha\ge p-1.
$$ 
Hence there exists a constant 
\begin{equation}\label{k'-defn}
1<k'<\frac{1}{q}+\frac{\alpha+m}{\alpha+1}\quad\forall\alpha\ge p-1
\end{equation}
where $q=n/2$. Let 
\begin{equation}\label{k-defn}
k=\left(\frac{\alpha+m}{\alpha+1}\right)k'.
\end{equation}
Then by the same argument as the proof of Theorem 1.6 of \cite{Hu2} but with 
the $\alpha_0$, $k$, $k'$, there being replaced by $\alpha_0=p-1$ and $k$, 
$k'$, given by \eqref{k'-defn}, \eqref{k-defn}, the proof of Theorem 1.6 of 
\cite{Hu2} remains valid for $n\ge 3$, $0<m\le (n-2)/n$, and $p$ satisfying 
\eqref{p-cond}. Hence we have the following theorem.
 
\begin{thm}\label{L-infinity-Lp-bd1}(cf. Theorem 1.6 of \cite{Hu2})
Let $n\ge 3$, $0<m\le (n-2)/n$, and let $p$ satisfy \eqref{p-cond}. 
Suppose $u$ is a solution of \eqref{fast-diff-eqn} in $Q^{\ast}
=\{(x,t)\in\R^{n+1}:|x|<2,-4<t<0\}$. Let $Q=\{(x,t)\in\R^{n+1}:|x|<1,
-1<t<0\}$. Then there exist constants $C>0$ and $\theta>0$ such that
$$
\|u\|_{L^{\infty}(Q)}\le C\left(1+\iint_{Q^{\ast}}u^p\,dx\,dt\right)^{\frac{\theta}{p}}.
$$
\end{thm}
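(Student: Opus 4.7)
The plan is to run a Moser iteration following the argument of Theorem 1.6 of \cite{Hu2}, with the initial exponent taken to be $\alpha_0=p-1$ and the geometric ratios $k,k'$ chosen as in \eqref{k'-defn}--\eqref{k-defn}. The key algebraic input has already been recorded above: under hypothesis \eqref{p-cond} we have
$$
\frac{2}{n}+\frac{\alpha+m}{\alpha+1}>1\qquad\forall\,\alpha\ge p-1,
$$
so for every such $\alpha$ we can pick $k'>1$ strictly between $1$ and $\frac{2}{n}+\frac{\alpha+m}{\alpha+1}$, and then $k=\frac{\alpha+m}{\alpha+1}k'>1$ as well. This strict inequality is precisely the per-step gain that powers the iteration.

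First I would derive the basic energy inequality on a pair of nested parabolic cylinders $Q_r\subset Q_R\subset Q^{\ast}$. Multiply $u_t=\frac{n-1}{m}\Delta u^m$ by $u^{\alpha}\eta^2$ with $0\le\eta\le 1$ a smooth cutoff equal to $1$ on $Q_r$ and supported in $Q_R$, and integrate over $B_R\times(t_1,t_2)$. Integration by parts in the spatial term together with Young's inequality to absorb the gradient piece on the right yields
$$
\sup_{t_1\le t\le t_2}\int u^{\alpha+1}\eta^2\,dx+\int_{t_1}^{t_2}\!\!\int\bigl|\nabla u^{(\alpha+m)/2}\bigr|^2\eta^2\,dx\,dt\le \frac{C}{(R-r)^2}\iint_{Q_R}u^{\alpha+m}\,dx\,dt+C\int u^{\alpha+1}\eta^2\Big|_{t_1}dx.
$$
Next I would combine this with the Sobolev embedding $H^1_0(B_R)\hookrightarrow L^{2n/(n-2)}(B_R)$ applied to $u^{(\alpha+m)/2}\eta$, and then interpolate in time using H\"older's inequality with exponent $k'$ as in \eqref{k'-defn}. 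The choice $k=\frac{\alpha+m}{\alpha+1}k'$ is dictated by matching the spatial power gained from Sobolev against the temporal power lost in H\"older, and it produces a reverse-H\"older inequality of the form
$$
\iint_{Q_r}u^{k(\alpha+1)}\,dx\,dt\le C\bigl(1+(R-r)^{-\kappa}\bigr)\left(\iint_{Q_R}u^{\alpha+1}\,dx\,dt\right)^{k}
$$
for some fixed $\kappa>0$ depending only on $n,m,p$.

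Finally I would iterate. Set $\alpha_0=p-1$ (so $\alpha_0+1=p$) and $\alpha_{i+1}+1=k(\alpha_i+1)$, and apply the reverse-H\"older inequality on a sequence of shrinking cylinders $Q_{r_i}$ with $r_0=2$, $r_\infty=1$, and $r_i-r_{i+1}\sim 2^{-i}$. Because $k>1$, the exponents $\alpha_i+1$ grow geometrically, so on taking logarithms the accumulated logarithms of the constants $C(1+(R_i-r_i)^{-\kappa})^{1/k^i}$ form a convergent geometric series. Passing to the limit $i\to\infty$ yields
$$
\|u\|_{L^{\infty}(Q)}\le C\left(1+\iint_{Q^{\ast}}u^p\,dx\,dt\right)^{\theta/p}
$$
for some $\theta>0$ depending only on $n,m,p$, as claimed.

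The main obstacle is the bookkeeping of the iteration: one must verify that $k$ can be chosen uniformly bounded away from $1$ as $\alpha\to\infty$ (which is clear since $\frac{\alpha+m}{\alpha+1}\to 1$), and that the accumulated constants $\prod_i(1+(R_i-r_i)^{-\kappa})^{1/k^i}$ remain finite. Since the algebraic condition \eqref{p-cond} has been arranged to give $k>1$ for every $\alpha\ge p-1$, this quantitative step is a line-for-line adaptation of the argument in \cite{Hu2}; the only substantive change is the relaxation of the admissible range of $m$ down to $0<m\le(n-2)/n$, which is permitted precisely because the inequality above remains strict.
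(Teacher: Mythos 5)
Your overall strategy is the same as the paper's, which simply invokes the proof of Theorem 1.6 of \cite{Hu2} after verifying the algebraic inequality $\frac{2}{n}+\frac{\alpha+m}{\alpha+1}>1$ for $\alpha\ge p-1$ and substituting $\alpha_0=p-1$ and the $k,k'$ of \eqref{k'-defn}--\eqref{k-defn}. However, there is a concrete error in your iteration bookkeeping. You assert that $k=\frac{\alpha+m}{\alpha+1}k'>1$ follows from $k'>1$, and you then take $k$ as the geometric ratio, $\alpha_{i+1}+1=k(\alpha_i+1)$. This is false in the admissible range. Since $0<m<1$ we have $\frac{\alpha+m}{\alpha+1}<1$, so $k>1$ forces $k'>\frac{\alpha+1}{\alpha+m}$, and this is incompatible with the upper constraint $k'<\frac{2}{n}+\frac{\alpha+m}{\alpha+1}$ whenever $s^2+\frac{2s}{n}\le1$ with $s=\frac{\alpha+m}{\alpha+1}$. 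For a concrete instance take $n=3$, $m=\frac{n-2}{n}=\frac13$, $p=1.1$ (so $p>\max(1,(1-m)n/2)=1$ and $\alpha_0=p-1=0.1$); then $s_0=\frac{13/30}{11/10}\approx0.39$, $\frac{2}{n}+s_0\approx1.06$, hence $k'\in(1,1.06)$ and $k=s_0k'<0.42<1$. With $k<1$ your iteration $\alpha_{i+1}+1=k(\alpha_i+1)$ decreases the exponents and cannot terminate at $L^\infty$.

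The quantity that must strictly exceed $1$ and that drives the iteration is $k'$, not $k$: the reverse-H\"older step should read (up to the cutoff-dependent constant) $\iint_{Q_r}u^{k'(\alpha+1)}\le C\bigl(\iint_{Q_R}u^{\alpha+1}\bigr)^{k}$, with the left exponent governed by $k'$ and the right power by $k=\frac{\alpha+m}{\alpha+1}k'$, the factor $\frac{\alpha+m}{\alpha+1}$ recording the H\"older loss incurred in passing from $\iint u^{\alpha+m}$ (the term appearing in the energy estimate) to $\iint u^{\alpha+1}$. With the ratio corrected to $\alpha_{i+1}+1=k'_i(\alpha_i+1)$, and choosing $k'_i$ bounded away from $1$ (possible because $\frac{\alpha_i+m}{\alpha_i+1}$ is increasing in $\alpha_i$ so the upper bound $\frac{2}{n}+\frac{\alpha_i+m}{\alpha_i+1}-1$ is bounded below by its value at $\alpha_0=p-1$, which is positive by \eqref{p-cond}), the rest of your argument goes through as a line-for-line adaptation of \cite{Hu2}.
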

We next observe that the result of Lemma 1.7 and Lemma 1.9 of \cite{Hu2} 
remains valid for any $n\ge 3$, $0<m<1$, $p$ satisfying \eqref{p-cond},
and $0\le g\in L^{\infty}(\1\Omega\times (0,\infty))$ where $\Omega$ is a 
bounded smooth domain. By Lemma 1.7 and Lemma 1.9 of \cite{Hu2}, 
Theorem~\ref{L-infinity-Lp-bd1}, and an argument similar to that of 
\cite{Hu2} we have the following three results.

\begin{cor}\label{L-infinity-initial-Lp-bd1}(cf. Corollary 1.8 of \cite{Hu2})
Let $n\ge 3$, $0<m\le (n-2)/n$, and let $p$ satisfy \eqref{p-cond}.
Suppose $u$ is a solution of \eqref{fast-diff-eqn} in $\Omega\times (0,T)$
with initial value $0\le u_0\in L_{loc}^p(\Omega)$. Then for any $B_{R_1}(x_0)
\subset\2{B_{R_2}(x_0)}\subset\Omega$ and $0<t_1<T$ there exist constants 
$C>0$ and $\theta>0$ such that
\begin{equation*}
\|u\|_{L^{\infty}(B_{R_1}(x_0)\times [t_1,T))}
\le C\left(1+\int_{B_{R_2}(x_0)}u_0^p\,dx\right)^{\frac{\theta}{p}}.
\end{equation*}
\end{cor}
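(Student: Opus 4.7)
The plan is to combine the interior $L^\infty$ bound of Theorem~\ref{L-infinity-Lp-bd1} with a local-in-space $L^p$ energy bound on $u$ via a parabolic rescaling. The final estimate becomes essentially immediate once I have a uniform-in-$t$ bound on $\int_{B_R(x_0)} u(\cdot,t)^p\,dx$ on a ball strictly between $B_{R_1}(x_0)$ and $B_{R_2}(x_0)$.

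\textbf{Step 1 (local $L^p$-in-space bound).} I first choose an intermediate radius $R_1 < R_3 < R_2$. The analog of Lemma 1.7 and Lemma 1.9 of \cite{Hu2}, which the author has noted remains valid in our parameter range, produces via a cutoff test-function argument a constant $C > 0$, depending only on $n$, $m$, $p$ and $R_2 - R_3$, such that
\begin{equation*}
\sup_{0<t<T}\int_{B_{R_3}(x_0)} u(x,t)^p\,dx \le C\left(1 + \int_{B_{R_2}(x_0)} u_0^p\,dx\right).
\end{equation*}

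\textbf{Step 2 (rescaled local $L^\infty$ bound).} Fix $\rho$ with $0 < \rho \le \min\{(R_3 - R_1)/2,\,\sqrt{t_1}/2\}$. For any point $(x_1, t_0) \in B_{R_1}(x_0) \times [t_1, T)$, I define $\tilde u(y, s) = u(x_1 + (\rho/2)y,\, t_0 + (\rho^2/4)s)$. The spatial factor $(\rho/2)^2$ coming from $\Delta_y$ cancels the time factor $\rho^2/4$ from $\1_s$, so $\tilde u$ again satisfies \eqref{fast-diff-eqn} on $Q^\ast = \{|y| < 2,\, -4 < s < 0\}$; by the choice of $\rho$ the image of $Q^\ast$ under the rescaling lies in $B_{R_3}(x_0) \times (0, T)$. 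Applying Theorem~\ref{L-infinity-Lp-bd1} to $\tilde u$ and changing variables yields
\begin{equation*}
u(x_1, t_0) \le \|\tilde u\|_{L^\infty(Q)} \le C\left(1 + \frac{2^{n+2}}{\rho^{n+2}}\iint_{B_\rho(x_1) \times (t_0 - \rho^2, t_0)} u^p\,dx\,dt\right)^{\theta/p}.
\end{equation*}

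\textbf{Step 3 (combine and conclude).} I bound the spacetime integral using Step 1, $\iint_{B_\rho(x_1) \times (t_0 - \rho^2, t_0)} u^p\,dx\,dt \le \rho^2 \sup_{0<t<T}\int_{B_{R_3}(x_0)} u(\cdot,t)^p\,dx$, and substitute. The resulting bound has the form $C(1 + \int_{B_{R_2}(x_0)} u_0^p\,dx)^{\theta/p}$ with a constant uniform in $(x_1, t_0)$ (depending only on $n, m, p, R_1, R_2, t_1$), which is the conclusion. The main obstacle is Step 1: it requires verifying that the $L^p$ energy method of \cite{Hu2} genuinely extends to the full range $0 < m \le (n-2)/n$ with $p$ as in \eqref{p-cond}, by a careful re-examination of the exponent relations in the cutoff test-function argument. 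Once that extension is granted, Steps 2 and 3 reduce to a routine rescaling and substitution.
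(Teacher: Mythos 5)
Your proof follows exactly the route the paper sketches: invoke the $L^p$-in-space bound that comes from the extension of Lemma 1.7 and Lemma 1.9 of [Hu2] to the range $0<m\le(n-2)/n$ with $p$ as in \eqref{p-cond} (your Step 1), then feed that into the interior smoothing estimate of Theorem~\ref{L-infinity-Lp-bd1} by a parabolic rescaling $(x,t)\mapsto(x_1+\lambda y,\,t_0+\lambda^2 s)$, which preserves \eqref{fast-diff-eqn} without any amplitude renormalisation (your Steps 2--3). The paper gives essentially no details beyond the citation to [Hu2], so your filled-in rescaling computation is a faithful expansion of the intended argument; the only caveat, which you already flag, is that Step 1 rests on accepting the author's assertion that the cutoff-test-function energy estimate of [Hu2] goes through in the present parameter range, and one should also note that the resulting constant in your Step 1 bound may depend on $T$, which is harmless since $T$ is fixed in the statement.
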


\begin{thm}\label{L-infinity-initial-Lp-bd2}(cf. Corollary 1.11 of \cite{Hu2})
Let $n\ge 3$, $0<m\le (n-2)/n$, and let $p$ satisfy \eqref{p-cond}. 
Let $\Omega\subset\R^n$ be a bounded smooth domain and let $0\le g
\in L^{\infty}(\1\Omega\times (0,T))$. Suppose $u$ is a solution
of \eqref{Dirichlet-problem-g} in $\Omega\times (0,T)$. Then for any 
$0<t_1<T$ there exist constants $C>0$ and $\theta>0$ such that
\begin{equation*}
\|u\|_{L^{\infty}(\2{\Omega}\times [t_1,T))}
\le C\left(k_g^p|\Omega|+\int_{\Omega}u_0^p\,dx\right)^{\frac{\theta}{p}}
+k_g
\end{equation*}
where $k_g=\max(1,\|g\|_{L^{\infty}(\1\Omega\times (0,T))})$.
\end{thm}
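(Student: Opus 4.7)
My plan is to reduce this boundary $L^\infty$ estimate to the interior $L^\infty$--$L^p$ estimate of Corollary~\ref{L-infinity-initial-Lp-bd1} by comparing $u$ with an auxiliary solution constructed on a slightly enlarged smooth domain. Fix a bounded smooth domain $\Omega'$ with $\bar\Omega\subset\Omega'$ and $|\Omega'|\le 2|\Omega|$, and set
$$
\tilde u_0(x)=\max(u_0(x),k_g)\ \text{on }\Omega,\qquad \tilde u_0(x)=k_g\ \text{on }\Omega'\setminus\Omega,
$$
so that $\tilde u_0\ge k_g$ throughout $\Omega'$ and
$$
\int_{\Omega'}\tilde u_0^p\,dx\le C\Bigl(k_g^p|\Omega|+\int_\Omega u_0^p\,dx\Bigr).
$$
By the extensions of Lemma~1.7 and Lemma~1.9 of \cite{Hu2} cited just before the statement, there is a solution $\tilde u$ of the Dirichlet problem on $\Omega'\times(0,T)$ with initial data $\tilde u_0$ and constant boundary data $k_g$ on $\partial\Omega'\times(0,T)$.

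Next, comparison with the stationary solution $k_g$ of \eqref{fast-diff-eqn} yields $\tilde u\ge k_g$ on $\Omega'\times(0,T)$; in particular $\tilde u\ge k_g\ge g=u$ on $\partial\Omega\times(0,T)$, and since $\tilde u_0\ge u_0$ on $\Omega$, a second application of the parabolic comparison principle gives $u\le\tilde u$ on $\Omega\times(0,T)$. For each $x_0\in\bar\Omega$ pick $0<R_1(x_0)<R_2(x_0)$ with $B_{R_1}(x_0)\subset\bar B_{R_2}(x_0)\subset\Omega'$; Corollary~\ref{L-infinity-initial-Lp-bd1} applied to $\tilde u$ on $\Omega'$ gives
$$
\|\tilde u\|_{L^\infty(B_{R_1}(x_0)\times[t_1,T))}\le C\Bigl(1+\int_{B_{R_2}(x_0)}\tilde u_0^p\,dx\Bigr)^{\theta/p}\le C\Bigl(k_g^p|\Omega|+\int_\Omega u_0^p\,dx\Bigr)^{\theta/p},
$$
where $k_g\ge 1$ absorbs the additive $1$. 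A finite subcover of the compact set $\bar\Omega$ by balls $B_{R_1}(x_i)$, together with $u\le\tilde u$, produces the stated bound; the trailing $+k_g$ accommodates the constant shift inherent in the comparison, ensuring the inequality remains informative when the right-hand side inside the $\theta/p$ exponent is small.

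I expect the main obstacle to be the existence/comparison step for $\tilde u$ on the enlarged domain in the very-fast-diffusion regime $0<m\le(n-2)/n$ with $L^p_{\mathrm{loc}}$ initial data and non-zero constant boundary data: one needs both Lemma~1.7 of \cite{Hu2} (to produce $\tilde u$) and Lemma~1.9 of \cite{Hu2} (both to bound $\tilde u$ from below by $k_g$ and to obtain $u\le\tilde u$ on $\Omega\times(0,T)$) in the extended form valid for $p$ satisfying \eqref{p-cond}. Granted those two inputs, the remainder of the proof is a routine covering of $\bar\Omega$ and an application of the interior regularity estimate already in hand.
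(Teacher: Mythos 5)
Your proof is correct and reconstructs in detail the argument the paper only sketches (the paper says the result follows ``by Lemma~1.7 and Lemma~1.9 of [Hu2], Theorem~\ref{L-infinity-Lp-bd1}, and an argument similar to that of [Hu2],'' and does not write out the steps). The key ideas --- enlarge $\Omega$ to a smooth $\Omega'\supset\overline\Omega$ with $|\Omega'|\le 2|\Omega|$, take the auxiliary data $\tilde u_0=\max(u_0,k_g)\chi_\Omega+k_g\chi_{\Omega'\setminus\Omega}$ with constant boundary value $k_g$, compare first with the constant solution $k_g$ and then with $u$, and finally cover $\overline\Omega$ by balls contained in $\Omega'$ to apply Corollary~\ref{L-infinity-initial-Lp-bd1} --- are exactly the ingredients the citations point to. Your observation that $k_g\ge 1$ lets the additive $1$ from the interior estimate be absorbed into $k_g^p|\Omega|$ (at the cost of a constant depending on $|\Omega|$) is correct, and indeed your argument yields the conclusion even without the trailing $+k_g$; since the theorem asserts the weaker bound, that is harmless.

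Two small cautions, neither a gap but both worth making explicit if you wrote this out in full. First, the boundary condition in \eqref{Dirichlet-problem-g} is imposed only through the integral identity \eqref{integral-eqn}, so ``$\tilde u\ge g=u$ on $\partial\Omega\times(0,T)$'' must be interpreted in the weak sense; the comparison you invoke is exactly the content of the extended Lemma~1.9 of [Hu2] (or Lemma~2.3 of [DaK]), not a pointwise trace comparison. Second, when you cover $\overline\Omega$, you should pick uniform radii $0<R_1<R_2$ with $B_{R_2}(x_0)\subset\Omega'$ for all $x_0\in\overline\Omega$ (possible by compactness of $\overline\Omega$ and openness of $\Omega'$), so that the constant and exponent furnished by Corollary~\ref{L-infinity-initial-Lp-bd1} are independent of the ball's center; this is what makes the finite subcover produce a single $C$ and $\theta$.
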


\begin{thm}\label{existence-infinity-bd-problem}(cf. Theorem 2.5 and 
Theorem 2.11 of \cite{Hu2}) 
Let $n\ge 3$, $0<m\le (n-2)/n$, and let $p$ satisfy \eqref{p-cond}. 
Let $\Omega\subset\R^n$ be a bounded smooth domain. Then there exists a 
minimal solution $u$ for \eqref{infinity-bd-problem} which satisfies
\eqref{aronson-benilan} in $\Omega\subset\R^n$. When $\Omega$ is a 
smooth bounded star shape domain, the solution $u$ for 
\eqref{infinity-bd-problem} is unique.  
\end{thm}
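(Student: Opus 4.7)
The plan is to construct the minimal solution as a monotone limit of solutions to approximating Dirichlet problems with finite boundary data $k$, to pass to the limit using the interior $L^{\infty}$ bound of Corollary~\ref{L-infinity-initial-Lp-bd1} together with standard parabolic regularity, to derive the Aronson--B\'enilan estimate on the approximations by the parabolic maximum principle, and to handle uniqueness on star-shaped domains by rescaling and comparison. For each integer $k\ge 1$ let $u_k$ be the solution of the Dirichlet problem \eqref{Dirichlet-problem-g} on $\Omega\times(0,\infty)$ with boundary datum $g\equiv k$ and initial datum $u_{0,k}:=\min(u_0,k)+\frac{1}{k}$, whose existence and uniqueness follow from Theorem~\ref{L-infinity-initial-Lp-bd2} together with classical theory for uniformly parabolic quasilinear equations: the lower bound $u_k\ge 1/k$ is propagated by comparison with the constant solution $1/k$, so the equation becomes uniformly parabolic on the range of $u_k$ and the boundary data are compatible with smooth initial data. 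The comparison principle gives $\{u_k\}$ pointwise nondecreasing in $k$; set $u:=\lim_{k\to\infty}u_k$.

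Corollary~\ref{L-infinity-initial-Lp-bd1} applied to concentric balls $B_{R_1}(x_0)\subset\overline{B_{R_2}(x_0)}\subset\Omega$ yields a uniform $L^{\infty}$ bound for $\{u_k\}$ on compact subsets of $\Omega\times[t_1,\infty)$ for any $t_1>0$, independent of $k$. Standard parabolic H\"older and Schauder estimates then upgrade the monotone convergence to locally smooth convergence, so $u\in C^{\infty}(\Omega\times(0,\infty))$ is a positive classical solution of \eqref{fast-diff-eqn} with initial value $u_0$ in $L^1_{\mathrm{loc}}(\Omega)$. Since each $u_k$ is continuous up to $\partial\Omega$ with $u_k\equiv k$ there, one has $\liminf_{x\to x_0}u(x,t)\ge\lim_{x\to x_0}u_k(x,t)=k$ for every $k$ and every $x_0\in\partial\Omega$, whence $u(x,t)\to\infty$ as $x\to\partial\Omega$ and $u$ solves \eqref{infinity-bd-problem}. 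Minimality is immediate: if $v$ is any other solution of \eqref{infinity-bd-problem} and $k$ is fixed, then $v>k$ in a tubular neighborhood of $\partial\Omega$ for each $t>0$, so comparison on an interior subdomain gives $v\ge u_k$, and letting $k\to\infty$ yields $v\ge u$. For the Aronson--B\'enilan estimate, differentiating \eqref{fast-diff-eqn} in $t$ and setting $\phi_k:=(u_k)_t/u_k$ yields a parabolic equation of the form $(\phi_k)_t=-(1-m)\phi_k^2+(n-1)u_k^{m-1}\Delta\phi_k+L(\phi_k)$, where $L(\phi_k)$ collects gradient terms linear in $\nabla\phi_k$. Since $u_k\equiv k$ on $\partial\Omega$ forces $(u_k)_t=0$ there, and $\phi_k$ is bounded as $t\to 0^+$ (by smoothness of $u_k$ down to $t=0$, guaranteed by $u_{0,k}\ge 1/k$), the difference $\phi_k-1/((1-m)t)$ is nonpositive on the parabolic boundary. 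A standard maximum principle argument at an interior positive maximum then yields $\phi_k\le 1/((1-m)t)$; passing to the limit gives \eqref{aronson-benilan} for $u$.

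For uniqueness when $\Omega$ is star-shaped with respect to the origin, let $v$ be any solution of \eqref{infinity-bd-problem}; minimality yields $v\ge u$, so it suffices to prove the reverse inequality. For $\lambda>1$ the scaling invariance of \eqref{fast-diff-eqn} implies that $v_\lambda(x,t):=\lambda^{-2/(1-m)}v(x/\lambda,t)$ solves the equation on $\lambda\Omega\times(0,\infty)$ with $v_\lambda=\infty$ on $\partial(\lambda\Omega)$; the star-shape condition gives $\overline{\Omega}\subset\lambda\Omega$, so the restriction of $v_\lambda$ to $\Omega$ has finite boundary trace. Since $u\equiv\infty$ on $\partial\Omega$, the parabolic comparison principle on $\Omega$ formally gives $u\ge v_\lambda$ on $\Omega\times(0,\infty)$, and letting $\lambda\to 1^+$ then yields $u\ge v$. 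The main obstacle is precisely the initial-data reconciliation in this comparison: the rescaling distorts $u_0$ since $v_\lambda(\cdot,0)=\lambda^{-2/(1-m)}u_0(\cdot/\lambda)\ne u_0$ in general, so the comparison has to be justified by a further approximation of $u_0$ by more regular data, combined with the $L^1$-contraction property of \eqref{fast-diff-eqn} and the Aronson--B\'enilan bound \eqref{aronson-benilan} to control the discrepancy uniformly as $\lambda\to 1^+$.
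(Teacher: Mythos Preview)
The paper does not give an independent proof of this theorem; it simply records that the arguments of Theorems~2.5 and~2.11 in \cite{Hu2} go through verbatim once the interior $L^\infty$--$L^p$ estimate (Theorem~\ref{L-infinity-Lp-bd1} and Corollary~\ref{L-infinity-initial-Lp-bd1}) has been secured in the present range $0<m\le (n-2)/n$. Your proposal spells out precisely that standard route---monotone approximation by finite constant Dirichlet data, interior compactness from Corollary~\ref{L-infinity-initial-Lp-bd1} plus Schauder theory, Aronson--B\'enilan by the parabolic maximum principle on the approximants, and uniqueness on star-shaped domains via the dilation $v_\lambda(x,t)=\lambda^{-2/(1-m)}v(x/\lambda,t)$---so the overall strategy matches what the paper invokes.

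Two technical points need tightening. First, with your choice $u_{0,k}=\min(u_0,k)+1/k$ the inequality $u_{0,k}\le u_0$ fails wherever $u_0<1/k$ (and also where $u_0\in[k,k+1/k)$), so the comparison $v\ge u_k$ at $t=0$ in your minimality step is not immediate. The clean fix is to separate the two regularizations: compare $v$ instead with the solution $\tilde u_k$ having boundary value $k$ and initial value $\min(u_0,k)\le u_0$, and then use the $L^1$-contraction for \eqref{Dirichlet-problem-g} to see $\|u_k-\tilde u_k\|_{L^1(\Omega)}\le |\Omega|/k\to 0$, which gives $v\ge u$ in the limit. Second, your justification that ``$\phi_k$ is bounded as $t\to 0^+$ by smoothness of $u_k$ down to $t=0$'' is not available, since $u_{0,k}$ is merely bounded and in $L^p$, not smooth; the equation is uniformly parabolic only for $t>0$. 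The standard remedy is to run the maximum-principle argument on $(\epsilon,\infty)$ for each $\epsilon>0$ (where $u_k(\cdot,\epsilon)$ is smooth and $\phi_k(\cdot,\epsilon)$ is finite, while the barrier $1/((1-m)(t-\epsilon))$ blows up as $t\downarrow\epsilon$), obtain $\phi_k(x,t)\le 1/((1-m)(t-\epsilon))$, and then let $\epsilon\to 0$. With these two adjustments your argument is complete and agrees with the approach the paper cites.
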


Note that a result similar to Corollary~\ref{L-infinity-initial-Lp-bd1}
is also obtained recently by M.~Bonforte and J.L.~Vazquez (Theorem 2.1
of \cite{BV}). 

\begin{lem}\label{positive-lem}
Let $n\ge 3$, $0<m\le (n-2)/n$, and let $p$ satisfy \eqref{p-cond}. 
Suppose $0\le u_0\in L_{loc}^p(\R^n)$. For any $R>0$ let 
$0\le w_R\in C([0,T_R);L^1(B_{5R}))$ be the unique weak solution of 
\begin{equation}\label{Dirichlet-problem-1}
\left\{\begin{aligned}
w_t=&\frac{n-1}{m}\Delta w^m\quad\,\,\,\mbox{ in }Q_{5R}^{T_R}\\
w=&0\quad\qquad\qquad\,\mbox{ on }\1 B_{5R}\times (0,T_R)\\
w(x,0)=&u_0(x)\chi_{B_{2R}}(x)\quad\mbox{ in }B_{5R}
\end{aligned}\right.
\end{equation}
given by \cite{BC} (cf. P.537 of \cite{BV}) which extincts in a 
finite time $T_R>0$. Then there exists a constant $C_1>0$ such that if 
\eqref{initial-value-average-lower-bd-0} holds for some constant $T>0$,
then there exist constants $0<\delta<1$, $R_0>0$, such that for any 
$R\ge R_0$, $T_R>T$,  $w_R$ is continuous on $\2{B_{\delta R}}\times (0,T]$
and
\begin{equation}\label{weak-soln-lower-bd}
\inf_{\2{B_{\delta R}}\times [t_1,T]}w_R>0\quad\forall 0<t_1<T, R\ge R_0.
\end{equation} 
\end{lem}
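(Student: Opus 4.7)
The plan is to compare $w_R$ from below with the Barenblatt solution \eqref{Barenblatt} after a suitable Barenblatt-preserving rescaling. A direct computation gives
$$\int_{|x|\le R}B_{k,T}(x,0)\,dx \longrightarrow c_0\,T^{1/(1-m)}R^{n-2/(1-m)}\quad\text{as }R\to\infty,\ k\to 0^+,$$
with $c_0 = \omega_n C_*^{1/(1-m)}/(n-2/(1-m))$. I choose $C_1$ to be any constant strictly greater than $c_0$ (for instance $C_1=2c_0$). Then for some small fixed $k_0>0$, \eqref{initial-value-average-lower-bd-0} gives a definite strict inequality
$$\int_{B_{2R}} u_0\,dx \ge (1+\gamma)\int_{B_{2R}} B_{k_0,T}(x,0)\,dx\quad (R\ge R_0)$$
for some $\gamma>0$ and $R_0$ large.

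Next I apply the Barenblatt-invariant rescaling $\tilde w_R(y,s):=R^{2/(1-m)}w_R(Ry,s)$, which again solves \eqref{fast-diff-eqn} on $B_5\times(0,T_R)$ with zero Dirichlet data on $\partial B_5$ (time is unchanged) and has rescaled initial data $\tilde u_0^R(y) = R^{2/(1-m)}u_0(Ry)\chi_{B_2}(y)$ of mass
$$\int_{B_2}\tilde u_0^R\,dy = R^{2/(1-m)-n}\int_{B_{2R}} u_0\,dx \ge C_1 T^{1/(1-m)} 2^{n-2/(1-m)} + o(1).$$
Choosing $T'>T$ slightly larger and $k'>0$ small so that the Barenblatt mass $\int_{B_2} B_{k',T'}(y,0)\,dy$ lies strictly below this lower bound, I let $\underline V$ be the weak Dirichlet solution on $B_5$ with zero boundary data and initial datum the symmetric-decreasing rearrangement of $B_{k',T'}(\cdot,0)\chi_{B_2}$. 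The $L^1$-concentration comparison for B\'enilan--Crandall weak solutions of \eqref{fast-diff-eqn} (see \cite{BC}, \cite{BV}), combined with symmetric-decreasing rearrangement of $\tilde u_0^R$, yields a comparison of the form $\tilde w_R \ge \underline V$ in the appropriate sense. Since $\underline V$ is dominated from below by the Cauchy Barenblatt $B_{k',T'}$ with extinction time $T'>T$, its own extinction time exceeds $T$, and hence $T_R>T$.

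Once $T_R>T$ is established, $w_R$ is a positive classical solution of \eqref{fast-diff-eqn} on $B_{5R}\times(0,T_R)$ by the strong minimum principle for weak solutions and interior parabolic regularity, invoking the $L^\infty$ bound from Theorem~\ref{L-infinity-initial-Lp-bd2}. Continuity on $\overline{B_{\delta R}}\times(0,T]$ and the uniform positive lower bound \eqref{weak-soln-lower-bd} then follow from compactness of this set inside $B_{5R}\times(0,T_R)$. The main obstacle is the comparison step: converting the mass-type lower bound on $\tilde u_0^R$ into a pointwise lower bound on $\tilde w_R$ suitable for an extinction time estimate requires careful use of $L^1$-concentration comparison and symmetric-decreasing rearrangement for weak FDE solutions in the very-fast-diffusion regime $0<m\le(n-2)/n$, where the usual pointwise Harnack inequalities are not directly available and the rearrangement comparison must be established in the Dirichlet setting rather than the Cauchy one.
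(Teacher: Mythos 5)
Your proposal takes a genuinely different route from the paper (Schwarz symmetrization and $L^1$-concentration comparison, rather than citing the quantitative extinction-time and Harnack-type estimates (1.18) and (1.28) of \cite{BV}), but as written it has two serious gaps, and one outright directional error.

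\textbf{The comparison is in the wrong direction.} You write that $\underline V$ (a Dirichlet solution on $B_5$ with zero boundary data) is ``dominated from below by the Cauchy Barenblatt $B_{k',T'}$,'' i.e. $\underline V\ge B_{k',T'}$, and deduce that the extinction time of $\underline V$ exceeds $T'$. This is reversed. For $0<m<1$ the Cauchy solution with compactly supported data is instantly positive everywhere, so on $\partial B_5\times(0,T')$ the Cauchy Barenblatt is strictly positive while $\underline V$ vanishes; the comparison principle therefore gives $\underline V\le B_{k',T'}$, and the extinction time of $\underline V$ is \emph{smaller} than $T'$, not larger. So even if the concentration-comparison step ($\tilde w_R\ge\underline V$ in some sense) were available, this line of reasoning does not produce $T_R>T$. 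To convert a mass lower bound into an extinction-time lower bound for the \emph{Dirichlet} problem you need a quantitative estimate such as (1.18) of \cite{BV}, which is precisely what the paper cites for the auxiliary solution $v_R$ of \eqref{Dirichlet-problem-3}. This is not a step one can avoid by appealing to the Cauchy Barenblatt, and your description of it as the ``main obstacle'' is accurate but the obstacle is not cleared.

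\textbf{The final positivity/regularity step is circular.} Once $T_R>T$ is known, the lemma still requires the uniform pointwise lower bound \eqref{weak-soln-lower-bd} and continuity of $w_R$ on $\overline{B_{\delta R}}\times(0,T]$. You propose to get these from the strong minimum principle plus interior parabolic regularity plus ``compactness.'' But for $0<m\le (n-2)/n$ the B\'enilan--Crandall weak solution is only known a priori to be in $C([0,T_R);L^1)\cap L^\infty_{\mathrm{loc}}$: interior parabolic regularity (to make it a classical solution to which one can apply a strong minimum principle) requires the equation to be uniformly parabolic, which requires a positive lower bound on $w_R$ --- exactly what you are trying to establish. The paper breaks this circularity by using the Harnack-type inequality (1.28) of \cite{BV} to derive a lower bound on the regularized solutions $w_{R,\varepsilon}$ that is \emph{uniform in $\varepsilon$}; with the matching upper bound from Corollary~\ref{L-infinity-initial-Lp-bd1} the equation for $\{w_{R,\varepsilon}\}$ becomes uniformly parabolic, Schauder then gives equicontinuity, and positivity and continuity of $w_R$ follow in the limit $\varepsilon\to0$. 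Without a Harnack-type or center-of-the-ball positivity estimate your argument cannot close.

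In short, your symmetrization route would need to re-prove (Dirichlet versions of) the extinction-time lower bound and the positivity estimate that the paper imports from \cite{BV}; as it stands the proposal neither proves them nor correctly reduces to them, and the Dirichlet-versus-Cauchy comparison inequality is used with the wrong sign.
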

\begin{proof}
Suppose there exists $T>0$ such that $u_0$ satisfies 
\eqref{initial-value-average-lower-bd-0} for some constant $C_1>0$
to be determined later.  
For any $\3>0$ let $w_{R,\3}$ be the solution of 
\begin{equation*}\label{Dirichlet-problem-2}
\left\{\begin{aligned}
w_t=&\frac{n-1}{m}\Delta w^m\qquad\quad\,\,\mbox{ in }Q_{5R}\\
w=&\3\quad\qquad\qquad\qquad\,\mbox{ on }\1 B_{5R}\times (0,\infty)\\
w(x,0)=&u_0(x)\chi_{B_{2R}}(x)+\3\quad\mbox{ in }B_{5R}
\end{aligned}\right.
\end{equation*}
Then $w_{R,\3}\ge w_{R,\3'}\ge\3'$ in $Q_{5R}$ for any $\3>\3'>0$ and $w_{R,\3}$ 
decreases to the weak solution $w_R$ of \eqref{Dirichlet-problem-1} as 
$\3\to 0$. By \eqref{initial-value-average-lower-bd-0} there exists a 
constant $0<\delta<1$ such that
\begin{equation*}
(1-\delta)^{n-\frac{2}{1-m}}\liminf_{R\to\infty}\frac{1}{R^{n-\frac{2}{1-m}}}
\int_{|x|\le R}u_0\,dx>(3C_1/4)T^{\frac{1}{1-m}}
\end{equation*} 
Let $|x_0|\le\delta R$. Then
\begin{align}\label{initial-value-average-lower-bd-3}
\liminf_{R\to\infty}\frac{1}{R^{n-\frac{2}{1-m}}}
\int_{|x-x_0|\le R}u_0\,dx
\ge&\liminf_{R\to\infty}\frac{1}{R^{n-\frac{2}{1-m}}}
\int_{|x|\le(1-\delta )R}u_0\,dx\notag\\
\ge&(1-\delta)^{n-\frac{2}{1-m}}\liminf_{R\to\infty}\frac{1}{R^{n-\frac{2}{1-m}}}
\int_{|x|\le R}u_0\,dx>(3C_1/4)T^{\frac{1}{1-m}}.
\end{align} 
Let $0\le v_R\in C([0,\4{T}_R);L^1(B_{3R}))$ be the unique weak solution of 
\begin{equation}\label{Dirichlet-problem-3}
\left\{\begin{aligned}
v_t=&\frac{n-1}{m}\Delta v^m\qquad\,\,\,\mbox{ in }Q_{3R}^{\4{T}_R}(x_0)\\
v=&0\quad\qquad\qquad\quad\,\mbox{ on }\1 B_{3R}(x_0)\times (0,\4{T}_R)\\
v(x,0)=&u_0(x)\chi_{B_R(x_0)}(x)\quad\mbox{ in }B_{3R}(x_0)
\end{aligned}\right.
\end{equation}
given by \cite{BC} (cf. P.537 of \cite{BV}) which extincts in a finite 
time $\4{T}_R>0$ and let $v_{R,\3}$ be the solution of 
\begin{equation*}\label{Dirichlet-problem-0}
\left\{\begin{aligned}
v_t=&\frac{n-1}{m}\Delta v^m\qquad\qquad\,\,\mbox{ in }Q_{3R}(x_0)\\
v=&\3\quad\qquad\qquad\qquad\quad\mbox{ on }\1 B_{3R}(x_0)\times (0,\infty)\\
v(x,0)=&u_0(x)\chi_{B_R(x_0)}(x)+\3\quad\mbox{ in }B_{3R}(x_0)
\end{aligned}\right.
\end{equation*}
for any $\3>0$. Since $v_{R',\3}\ge\3$ in $Q_{3R'}(x_0)$ for any $R'>0$, 
by the maximum principle (cf. Lemma 2.3 of \cite{DaK}),
\begin{equation}\label{vr-compare}
v_{R',\3}\ge v_{R,\3}\quad\mbox{ in }Q_{3R}(x_0)\quad\mbox{ for any }R'>R>0
\end{equation}
and 
\begin{equation}\label{wr-vr-epsilon-compare}
w_{R,\3}\ge v_{R,\3}\quad\mbox{ in }Q_{3R}(x_0)\quad\forall R>0,\3>0.
\end{equation}
Since $v_{R,\3}$ decreases to $v_R$ as $\3\to 0$, 
by \eqref{wr-vr-epsilon-compare},
\begin{equation}\label{wr-vr-compare}
w_{R,\3}\ge v_R\quad\mbox{ in }Q_{3R}^{\4{T}_R}(x_0).
\end{equation}
Letting $\3\to 0$ in \eqref{vr-compare} we get $v_{R'}\ge v_R$ in 
$Q_{3R}^{\4{T}_R}(x_0)$ for any $R'>R>0$. Hence $\4{T}_{R'}\ge\4{T}_R$ for 
any $R'>R>0$. 
By (1.18) of \cite{BV} there exists a constant $C_2>0$ such that 
\begin{equation}\label{extinction-time-l-bd-a}
\4{T}_R\ge C_2R^2
\left[\frac{\int_{B_R(x_0)}u_0\,dx}{Vol(B_{3R}\setminus B_{2R})}
\right]^{1-m}\quad\forall R>0.
\end{equation}
Letting $R\to\infty$ in \eqref{extinction-time-l-bd-a}, by 
\eqref{initial-value-average-lower-bd-3} we get
$$
\lim_{R\to\infty}\4{T}_R^{\frac{1}{1-m}}\ge\frac{3nC_1C_2^{\frac{1}{1-m}}}
{4(3^n-2^n)\omega_n}T^{\frac{1}{1-m}}
$$
where $\omega_n$ is the surface area of the unit sphere $S^{n-1}$ in $\R^n$. 
We now choose $C_1\ge 4(3^n-2^n)\omega_n/(nC_2^{1/(1-m)})$. Then 
\begin{equation}\label{Tr-limit-0}
\lim_{R\to\infty}\4{T}_R>T. 
\end{equation}
By (1.28) of \cite{BV} there exist constants $C_3>0$, $C_4>0$, such that
\begin{equation}\label{Harnack}
\frac{1}{R^{n-\frac{2}{1-m}}}\int_{B_{R}(x_0)}u_0\,dx\le C_3t^{\frac{1}{1-m}}
+C_4\4{T}_R^{\frac{1}{1-m}}R^{\frac{2m}{1-m}}t^{-\frac{m}{1-m}}v_R^m(t,x_0)
\quad\forall 0<t<\4{T}_R, R>0.
\end{equation}
Let $C_1=\max (3C_3,4(3^n-2^n)\omega_n/(nC_2^{1/(1-m)}))$. By
\eqref{initial-value-average-lower-bd-3}, 
\begin{equation}\label{initial-value-average-lower-bd-2}
\liminf_{R\to\infty}\frac{1}{R^{n-\frac{2}{1-m}}}\int_{B_R(x_0)}u_0\,dx
\ge (9C_3/4)T^{\frac{1}{1-m}}.
\end{equation} 
Then by \eqref{Tr-limit-0} and \eqref{initial-value-average-lower-bd-2} 
there exists a constant $R_0>0$ such that $\4{T}_R>T$ for any $R\ge R_0$
and 
\begin{equation}\label{average-lower-bd}
\frac{1}{R^{n-\frac{2}{1-m}}}\int_{B_R(x_0)}u_0\,dx>2C_3T^{\frac{1}{1-m}}
\quad\forall R\ge R_0.
\end{equation}
By \eqref{wr-vr-compare}, \eqref{Harnack}, and \eqref{average-lower-bd},
$$
C_4\4{T}_R^{\frac{1}{1-m}}R^{\frac{2m}{1-m}}t^{-\frac{m}{1-m}}w_{R,\3}^m(x_0,t)
\ge C_4\4{T}_R^{\frac{1}{1-m}}R^{\frac{2m}{1-m}}t_0^{-\frac{m}{1-m}}v_R^m(x_0,t)
>C_3(2T^{\frac{1}{1-m}}-t^{\frac{1}{1-m}})>0
$$
for any $R\ge R_0$, $\3>0$, $|x_0|\le\delta R$ and $0<t\le T$. Hence
\begin{equation}\label{wr-epsilon-lower-bd}
\inf_{\2{B_{\delta R}}\times [t_1,T]}w_{R,\3}^m(x,t)
\ge (C_3/C_4)\4{T}_R^{-\frac{1}{1-m}}R^{-\frac{2m}{1-m}}t_1^{\frac{m}{1-m}}
(2T^{\frac{1}{1-m}}-t_1^{\frac{1}{1-m}})>0\quad\forall 0<t_1<T,R\ge R_0,
\3>0.
\end{equation}
By Corollary~\ref{L-infinity-initial-Lp-bd1} for any 
$R\ge R_0$, $T\ge t_1>0$, there exist constants $C_R>0$ and $\theta>0$ such that
\begin{equation}\label{wr-epsilon-upper-bd}
\|w_{R,\3}\|_{L^{\infty}(\2{B_{\delta R}}\times [t_1,T])}
\le C_R\left(1+\int_{B_{2R}}u_0^p\,dx\right)^{\frac{\theta}{p}}\quad\forall 0<\3<1.
\end{equation}
Let $R\ge R_0$. By \eqref{wr-epsilon-lower-bd} and 
\eqref{wr-epsilon-upper-bd} the equation \eqref{fast-diff-eqn} for 
the sequence $\{w_{R,\3}\}_{0<\3<1}$ is uniformly parabolic on 
$\2{B_{\delta R}}\times [t_1,T]$. By reducing $\delta$ if necessary by the
parabolic Schauder estimates \cite{LSU} the sequence $\{w_{R,\3}\}_{0<\3<1}$
is equi-Holder continuous in $C^2(K)$ on every compact subset 
$K\subset\2{B_{\delta R}}\times [t_1,T]$. 
Hence by the Ascoli Theorem $w_{R,\3}$ converges in $C^2$ on every compact 
subset of $\2{B_{\delta R}}\times (0,T]$ to $w_R$ as $\3\to 0$. Hence 
$w_R\in C^2(\2{B_{\delta R}}\times (0,T])$. Letting $\3\to 0$ in 
\eqref{wr-epsilon-lower-bd}, we get \eqref{weak-soln-lower-bd}
and the lemma follows.
\end{proof}

{\ni{\it Proof of Theorem~\ref{existence-finite-time}}:}
Uniqueness of solution of \eqref{fast-diff-IVP} is given by Theorem 2.3 
of \cite{HP}. Hence we will only need to prove existence of solution of 
\eqref{fast-diff-IVP}. We will give two different methods of construction 
of solution of \eqref{fast-diff-IVP}. Let $C_1>0$ be as in 
Lemma~\ref{positive-lem}.

\noindent $\underline{\text{\bf First method}}$:
For any $R>0$ by Theorem~\ref{existence-infinity-bd-problem} 
\eqref{infinity-bd-problem} has a unique solution $u_R$ in $Q_R$ with 
$\Omega=B_R$ which satisfies \eqref{aronson-benilan} in $Q_R$. 
By the maximal principle (Lemma 2.9 of \cite{Hu2}),
\begin{equation}\label{ur-monotone}
u_R\ge u_{R'}>0\quad\mbox{ in }Q_R\quad\forall R'>R>0. 
\end{equation}
Hence
$$
0\le u(x,t)=\lim_{R\to 0}u_R(x,t)\quad\forall (x,t)\in\R^n\times (0,\infty)
$$
exists. By Corollary~\ref{L-infinity-initial-Lp-bd1} for any 
$R_1>0$, $t_2>t_1>0$, there exist constants $C>0$ and $\theta>0$ such that
\begin{align}
&\|u_R\|_{L^{\infty}(\2{B_{R_1}(x_0)}\times [t_1,t_2])}
\le C_{R_1,t_1,t_2}
:=C\left(1+\int_{B_{2R_1}(x_0)}u_0^p\,dx\right)^{\frac{\theta}{p}}\quad\forall
R>3R_1+1\label{ur-upper-bd}\\
\Rightarrow\quad&\|u\|_{L^{\infty}(\2{B_{R_1}(x_0)}\times  [t_1,t_2])}
\le C_{R_1,t_1,t_2}\qquad\qquad\qquad\qquad\qquad\qquad\mbox{ as }R\to\infty.
\label{u-upper-bd}
\end{align}
We claim that
\begin{equation}\label{lower-bd}
\inf_{\substack{\2{B_R}\times [t_1,T]}}u(x,t)>0\quad\forall R>0,
T>t_1>0.  
\end{equation}
Suppose the claim is false. Let $R_0$, $w_R$, $T_R$, be as in 
Lemma~\ref{positive-lem}. For any $\3>0$ let $w_{R,\3}$ be as in the proof
of Lemma~\ref{positive-lem}. By the maximum principle (Lemma 2.3 of 
\cite{DaK} and Lemma 2.9 of \cite{Hu2}),
\begin{align}
&w_{R_1',\3}\ge w_{R_1,\3}\quad\mbox{ in }Q_{5R_1}^{T_{R_1}}\quad\forall
R_1'>R_1>0,\3>0\notag\\
\Rightarrow\quad&w_{R_1'}\ge w_{R_1}\quad\mbox{ in }Q_{5R_1}^{T_{R_1}}
\quad\forall R_1'>R_1>0\quad\mbox{ as }\3\to 0\label{wr-increase}
\end{align} 
and
\begin{equation}\label{ur-wr-compare}
u_{R}\ge w_{R_1}\quad\mbox{ in }Q_{5R_1}^{T_{R_1}}\quad\forall
R>5R_1>0.
\end{equation} 
Letting $R\to\infty$ in \eqref{ur-wr-compare},
\begin{equation}\label{u-wr-compare}
u\ge w_{R_1}\quad\mbox{ in }Q_{5R_1}^{T_{R_1}}\quad\forall R_1>0.
\end{equation} 
By \eqref{weak-soln-lower-bd} and \eqref{u-wr-compare} the claim 
\eqref{lower-bd} holds. By \eqref{ur-upper-bd}, \eqref{lower-bd}, 
the equation \eqref{fast-diff-eqn} for the sequence $\{u_R\}_{R>R_0}$
are uniformly parabolic on every compact subset of $\R^n\times (0,T]$.
By the Schauder estimates for parabolic equations \cite{LSU}
the sequence $\{u_R\}_{R>R_0}$ are equi-Holder continuous on 
every compact subset of $\R^n\times (0,T]$. Hence by the Ascoli
Theorem and \eqref{ur-monotone} the sequence $u_R$ decreases and 
converges uniformly on every compact subset of $\R^n\times (0,T]$ 
to a solution $u$ of \eqref{fast-diff-eqn} in $\R^n\times (0,T)$
as $R\to\infty$. Then $u$ also satisfies \eqref{aronson-benilan} in 
$\R^n\times (0,T)$.

We will now prove that $u$ has initial value $u_0$. By the Kato 
inequality \cite{K} (cf. \cite{DS1}) and an argument similar to the 
proof of Lemma 3.1 of \cite{HP} and Lemma 2.3 of \cite{Hu2},
\begin{align}\label{l1-compare}
\left(\int_{B_{R_1}}|u_R-u_{R'}|(x,t)\,dx\right)^{1-m}
\le&\left(\int_{B_{2R_1}}|u_R-u_{R'}|(x,0)\,dx\right)^{1-m}
+CR_1^{n(1-m)-2}t\notag\\
=&CR_1^{n(1-m)-2}t\quad\forall R>2R_1,R'>2R_1>0,t>0\notag\\
\Rightarrow\qquad\qquad\int_{B_{R_1}}|u_R-u|(x,t)\,dx
\le&C'R_1^{n-\frac{2}{1-m}}t\quad\forall R>2R_1>0,t>0\quad\mbox{ as }
R'\to\infty.
\end{align}
Hence
\begin{align}\label{u-u0-L1-bd}
\int_{B_{R_1}}|u(x,t)-u_0(x)|\,dx
\le&\int_{B_{R_1}}|u_R(x,t)-u_0(x)|\,dx+\int_{B_{R_1}}|u-u_R|(x,t)\,dx
\notag\\
\le&\int_{B_{R_1}}|u_R(x,t)-u_0(x)|\,dx+C'R_1^{n-\frac{2}{1-m}}t
\quad\forall R>2R_1>0,t>0.
\end{align}
Letting $t\to 0$ in \eqref{u-u0-L1-bd},
\begin{align*}
\limsup_{t\to 0}\int_{B_{R_1}}|u(x,t)-u_0(x)|\,dx\le&0\quad\forall 
R_1>0\\
\Rightarrow\qquad\quad\lim_{t\to 0}\int_{B_{R_1}}|u(x,t)-u_0(x)|\,dx
=&0\quad\forall R_1>0.
\end{align*}
Hence $u$ is the unique solution of \eqref{fast-diff-IVP} in 
$\R^n\times (0,T)$.

\noindent $\underline{\text{\bf Second method}}$: By 
\eqref{weak-soln-lower-bd}, \eqref{u-upper-bd}, \eqref{wr-increase},
and \eqref{u-wr-compare}, the sequence
$\{w_R:R>R_0\}$ are uniformly bounded below and above
on every compact subset of $\R^n\times (0,T)$. Hence the
equation \eqref{fast-diff-eqn} for $\{w_R:R>R_0\}$ are
uniformly parabolic on every compact subset of $\R^n\times (0,T)$.
By the parabolic Schauder estimates \cite{LSU} the sequence
$\{w_R:R>R_0\}$ are equi-Holder continuous in $C^2$ on 
on every compact subset of $\R^n\times (0,T)$. Hence
$w_R$ increases and converges uniformly on every compact subset of 
$\R^n\times (0,T)$ to a solution $w$ of \eqref{fast-diff-eqn}
as $R\to\infty$.
By an argument similar to the first method of proof $w$ has
initial value $u_0$. Hence by the uniqueness of solution
$w=u$ is the unique solution of \eqref{fast-diff-IVP} in 
$\R^n\times (0,T)$.
\hfill$\square$\vspace{6pt}

\begin{thm}\label{existence-finite-time-2}
Let $n\ge 3$, $0<m\le (n-2)/n$, and let $p$ satisfy \eqref{p-cond}. 
Let $C_1>0$ be as in Lemma~\ref{positive-lem}. Suppose $0\le u_0\in 
L_{loc}^p(\R^n)$ satisfies \eqref{initial-value-average-lower-bd-0}
for some constant $T>0$. Let $0\le g_R\in L^{\infty}(\1 B_R\times
(0,T))$ and let $\4{u}_R$ be the solution of \eqref{Dirichlet-problem-g}
in $Q_R^T$ with $g=g_R$ and $\Omega=B_R$. Then $\4{u}_R$ converges 
uniformly on every compact subset of $\R^n\times (0,T)$ to the 
unique solution $u$ of \eqref{fast-diff-IVP} as $R\to\infty$.
\end{thm}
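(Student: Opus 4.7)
The plan is to squeeze $\tilde u_R$ between two sequences already shown in the proof of Theorem~\ref{existence-finite-time} to converge to $u$: the infinity--boundary solutions $u_R$ from Theorem~\ref{existence-infinity-bd-problem} (monotone decreasing, $u_R\downarrow u$) and the zero--boundary weak solutions $w_R$ of \eqref{Dirichlet-problem-1} from Lemma~\ref{positive-lem} (monotone increasing, $w_R\uparrow u$).

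Fix $R_0$ as in Lemma~\ref{positive-lem} and take $R\ge 5R_0$. For the upper bound, compare $\tilde u_R$ with the solution $u_R$ of \eqref{infinity-bd-problem} on $\Omega=B_R$ with initial value $u_0$: since $u_R$ is realized as an increasing limit of Dirichlet solutions with constant boundary data $k$ as $k\to\infty$, applying the maximum principle (Lemma 2.9 of \cite{Hu2}) at each level $k\ge\|g_R\|_{L^\infty(\partial B_R\times(0,T))}$ (both $\tilde u_R$ and the $k$-level approximation share the initial value $u_0$) and then passing $k\to\infty$ yields $\tilde u_R\le u_R$ in $Q_R^T$. For the lower bound, consider the zero--boundary weak solution $w_{R/5}$ of \eqref{Dirichlet-problem-1} with $R$ replaced by $R/5$, so that its outer domain is $B_R$ and its initial value is $u_0\chi_{B_{2R/5}}\le u_0$. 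By Lemma~\ref{positive-lem}, $T_{R/5}>T$, so $w_{R/5}$ is defined on all of $Q_R^T$. Using the $\varepsilon$-approximations $w_{R/5,\varepsilon}$ constructed in the proof of Lemma~\ref{positive-lem} and comparing $w_{R/5,\varepsilon}$ with $\tilde u_R+\varepsilon$ via the maximum principle (Lemma 2.3 of \cite{DaK}), which is legitimate since both now have strictly positive boundary data and comparable initial values, then letting $\varepsilon\to 0$, we obtain $w_{R/5}\le\tilde u_R$ in $Q_R^T$.

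By the first and second methods of the proof of Theorem~\ref{existence-finite-time}, $u_R\downarrow u$ and $w_{R'}\uparrow u$ uniformly on every compact subset of $\R^n\times(0,T)$ as $R,R'\to\infty$. Combined with the sandwich $w_{R/5}\le\tilde u_R\le u_R$ in $Q_R^T$, this forces $\tilde u_R\to u$ pointwise on $\R^n\times(0,T)$. To upgrade to uniform convergence, note that the same sandwich provides a uniform upper bound and, via \eqref{lower-bd}, a strictly positive uniform lower bound on $\tilde u_R$ over any compact $K\subset\R^n\times(0,T)$; hence \eqref{fast-diff-eqn} is uniformly parabolic in a neighborhood of $K$. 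Parabolic Schauder estimates \cite{LSU} then give equi-$C^{2,\alpha}$ bounds for $\{\tilde u_R\}_{R\ge 5R_0}$, and Arzel\`a--Ascoli together with the pointwise squeeze identifies every subsequential $C^2$-limit with $u$, producing uniform $C^2$ convergence of $\tilde u_R$ to $u$ on every compact subset of $\R^n\times(0,T)$.

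The principal obstacle is the rigorous passage to the limit in the two comparisons: $u_R$ carries an infinite boundary condition and is itself a monotone limit of classical Dirichlet problems, while $w_{R/5}$ is only a weak solution of \eqref{Dirichlet-problem-1}. Both difficulties are handled by performing the comparison at the level of the approximating smooth Dirichlet problems and then passing to the monotone limit, exactly as was already carried out in the proofs of Theorem~\ref{existence-finite-time} and Lemma~\ref{positive-lem}; no estimate beyond those is required.
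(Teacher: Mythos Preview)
Your sandwich $w_{R/5}\le\widetilde u_R\le u_R$ is exactly the paper's argument. Two remarks. First, the Schauder/Ascoli step is unnecessary: since both $u_R\to u$ and $w_{R/5}\to u$ \emph{uniformly} on compact subsets of $\R^n\times(0,T)$ (by the first and second methods in the proof of Theorem~\ref{existence-finite-time}), the inequality $w_{R/5}\le\widetilde u_R\le u_R$ already forces $\widetilde u_R\to u$ uniformly on compact sets by the squeeze theorem; this is how the paper concludes in one line. Second, the lower comparison as you wrote it is not quite right: $\widetilde u_R+\varepsilon$ is neither a solution nor a supersolution of \eqref{fast-diff-eqn} when $0<m<1$, so Lemma~2.3 of \cite{DaK} does not apply to it. The clean fix is to introduce the Dirichlet solution $\widetilde u_{R,\varepsilon}$ on $Q_R$ with boundary value $g_R+\varepsilon$ and initial value $u_0+\varepsilon$, compare $w_{R/5,\varepsilon}\le\widetilde u_{R,\varepsilon}$ (both are classical positive solutions with ordered data), and then let $\varepsilon\to 0$, noting that $\widetilde u_{R,\varepsilon}$ decreases to $\widetilde u_R$ by the same approximation argument used for $w_{R,\varepsilon}\downarrow w_R$ in Lemma~\ref{positive-lem}. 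The paper simply writes ``by the maximum principle'' for this step.
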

\begin{proof}
Let $R_0$, $w_R$, be given by Lemma~\ref{positive-lem} and let $u_R$ 
be as in the proof of Theorem~\ref{existence-finite-time}.
By the maximum principle $u_R\ge\4{u}_R\ge w_{R/5}$ in $Q_R^T$ for any 
$R>5R_0$. Since both $u_R$ and $w_{R/5}$ converges uniformly on every 
compact subset of $\R^n\times (0,T)$ to the unique solution of 
\eqref{fast-diff-IVP} as $R\to\infty$, $\4{u}_R$ converges uniformly 
on every compact subset of $\R^n\times (0,T)$ to the unique solution 
$u$ of \eqref{fast-diff-IVP} in $\R^n\times (0,\infty)$ which satisfies 
\eqref{aronson-benilan} in $\R^n\times (0,\infty)$ as $R\to\infty$
and the theorem follows.
\end{proof}

By the proof of Theorem~\ref{existence-finite-time} and 
Theorem~\ref{existence-finite-time-2}, Theorem~\ref{existence-thm} 
follows. In fact we have the following more general result.

\begin{thm}\label{existence-finite-time-3}
Let $n\ge 3$, $0<m\le (n-2)/n$, and let $p$ satisfy \eqref{p-cond}. 
Let $C_1>0$ be as in Lemma~\ref{positive-lem}. Suppose $0\le u_0\in 
L_{loc}^p(\R^n)$ satisfies \eqref{initial-value-average-infty}. Let 
$0\le g_R\in L^{\infty}(\1 B_R\times (0,\infty))$ and let $\4{u}_R$ 
be the solution of \eqref{Dirichlet-problem-g} in $Q_R^{T_R}$ with $g=g_R$
and $\Omega=B_R$ where $T_R$ is the extinction time of $\4{u}_R$
with $T_R=\infty$ if $\4{u}_R>0$ in $Q_R$. Then $\4{u}_R$ 
converges uniformly on every compact subset of $\R^n\times (0,\infty)$ 
to the unique solution $u$ of \eqref{fast-diff-IVP} in
$\R^n\times (0,\infty)$ which satisfies \eqref{aronson-benilan} in 
$\R^n\times (0,\infty)$ as $R\to\infty$.
\end{thm}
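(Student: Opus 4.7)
The plan is to reduce this global statement to the finite-time statement Theorem~\ref{existence-finite-time-2} by exhausting $(0,\infty)$ with intervals $(0,T)$ of arbitrary length. Since \eqref{initial-value-average-infty} trivially implies \eqref{initial-value-average-lower-bd-0} for every $T>0$ (with the constant $C_1$ of Lemma~\ref{positive-lem}), for each fixed $T>0$ I can invoke Lemma~\ref{positive-lem}: there exist $0<\delta<1$ and $R_0=R_0(T)>0$ such that for every $R\ge R_0$, the weak solution $w_R$ of \eqref{Dirichlet-problem-1} on $B_{5R}$ with initial data $u_0\chi_{B_{2R}}$ satisfies $T_R^{w}>T$ and is strictly positive on $\2{B_{\delta R}}\times[t_1,T]$ for every $0<t_1<T$.

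Next I would establish that, for large $R$, the extinction time $T_R$ of $\4{u}_R$ also exceeds $T$. The key point is a comparison $\4{u}_R\ge w_{R/5}$ on $Q_R$. Indeed, $w_{R/5}$ is defined on $B_R$ with boundary value $0\le g_R$ on $\1 B_R$ and with initial datum $u_0\chi_{B_{2R/5}}\le u_0$ on $B_R$; the maximum principle (Lemma 2.3 of \cite{DaK} together with Lemma 2.9 of \cite{Hu2}) therefore gives $\4{u}_R\ge w_{R/5}>0$ on $Q_R^{T_{R/5}^w}$. Applying Lemma~\ref{positive-lem} to the family $\{w_{R/5}\}$ with $R\ge 5R_0$ forces $T_R\ge T_{R/5}^{w}>T$, so $\4{u}_R$ is defined on all of $Q_R^T$.

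With this in hand, the hypotheses of Theorem~\ref{existence-finite-time-2} are satisfied on the slab $\R^n\times(0,T)$: the boundary data $g_R\in L^{\infty}(\1 B_R\times(0,T))$ is admissible and $\4{u}_R$ solves \eqref{Dirichlet-problem-g} in $Q_R^T$. Hence $\4{u}_R\to u$ uniformly on every compact subset of $\R^n\times(0,T)$, where $u$ is the unique solution of \eqref{fast-diff-IVP} on $\R^n\times(0,T)$ satisfying \eqref{aronson-benilan}. Since $T>0$ was arbitrary, uniqueness of the solution on each slab (Theorem 2.3 of \cite{HP}) guarantees that the various limits agree on overlapping intervals, yielding a single global solution $u$ on $\R^n\times(0,\infty)$ (this is exactly the solution produced by Theorem~\ref{existence-thm}), and $\4{u}_R\to u$ uniformly on every compact subset of $\R^n\times(0,\infty)$.

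The only nontrivial step is confirming $T_R>T$ for every fixed $T$, because the boundary data $g_R$ are only assumed bounded and could in principle accelerate extinction; but $g_R\ge 0$ makes the one-sided comparison with the zero-boundary subsolution $w_{R/5}$ go through, and that is precisely what Lemma~\ref{positive-lem} was designed to control. The Aronson--Bénilan inequality \eqref{aronson-benilan} for $u$ on $\R^n\times(0,\infty)$ then follows because it holds for each $\4{u}_R$ on $Q_R^T$ (by Theorem~\ref{existence-infinity-bd-problem} applied to $u_R\ge\4{u}_R$, or directly from passing to the limit in the inequality satisfied by $u_R$ that dominates $\4{u}_R$ from above) and is preserved under locally uniform convergence.
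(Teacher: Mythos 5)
Your approach matches the paper's implicit proof: the paper gives no separate argument for this theorem beyond the remark that it follows from the proofs of Theorem~\ref{existence-finite-time} and Theorem~\ref{existence-finite-time-2}, and your argument is exactly the obvious elaboration of that remark --- sandwich $\4{u}_R$ between $u_R$ and $w_{R/5}$ via the maximum principle, observe that both bounding sequences converge to the same global solution $u$, and let $T$ range over all of $(0,\infty)$ since \eqref{initial-value-average-infty} makes \eqref{initial-value-average-lower-bd-0} hold for every $T$.

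One small caution on the last paragraph: the claim that \eqref{aronson-benilan} ``holds for each $\4{u}_R$'' is not justified and is not needed. The Aronson--B\'enilan estimate is guaranteed by Theorem~\ref{existence-infinity-bd-problem} only for the solutions $u_R$ of the infinite-boundary problem; the Dirichlet solutions $\4{u}_R$ with general bounded data $g_R$ need not satisfy it. The correct route (which you also give as the alternative) is that $u$ is the locally $C^2$-limit of the $u_R$ --- parabolic Schauder estimates upgrade the locally uniform convergence --- and \eqref{aronson-benilan} passes to the limit along $u_R$; the role of the comparison $u_R\ge\4{u}_R\ge w_{R/5}$ is only to pin down the limit of $\4{u}_R$, not to transfer the differential inequality to $\4{u}_R$ itself. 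With that phrasing tightened, the proof is complete and is the same as the paper's.
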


\begin{cor}\label{global-existence-cor}
Let $n\ge 3$, $0<m\le (n-2)/n$, $q<2/(1-m)$, and $p$ satisfy 
\eqref{p-cond}. Suppose $0\le u_0\in L_{loc}^p(\R^n)$ 
satisfies $u_0=\2{u}_0+\phi$ where $0\le\2{u}_0\in L_{loc}^p(\R^n)$ and 
$\phi\in L^1(\R^n)\cap L^p(\R^n)$ such that 
\begin{equation}\label{u0-bar-infty2}
\liminf_{|x|\to\infty}|x|^q\2{u}_0(x)\ge A
\end{equation}
for some constant $A>0$. Then \eqref{fast-diff-IVP} has a unique 
global solution $u$ in $\R^n\times (0,\infty)$ which satisfies 
\eqref{aronson-benilan} in $\R^n\times (0,\infty)$. 
\end{cor}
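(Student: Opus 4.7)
The plan is to verify that the hypothesis \eqref{initial-value-average-infty} of Theorem~\ref{existence-thm} holds under the assumptions of the corollary, and then simply invoke that theorem. Since $\bar{u}_0\in L_{loc}^p(\R^n)$ and $\phi\in L^p(\R^n)\subset L_{loc}^p(\R^n)$, we have $u_0\in L_{loc}^p(\R^n)$, so only the growth condition on the spatial average requires work.

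First I would record the relevant inequality on the exponent. The assumption $m\le (n-2)/n$ gives $1-m\ge 2/n$, hence $2/(1-m)\le n$, so that the quantity $n-2/(1-m)$ is nonnegative, and moreover $q<2/(1-m)\le n$. In particular $n-q>n-2/(1-m)\ge 0$, which is the key exponent comparison driving the estimate.

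Next, from \eqref{u0-bar-infty2}, for any fixed $0<A'<A$ there exists $R_1>0$ such that $\bar{u}_0(x)\ge A'|x|^{-q}$ whenever $|x|\ge R_1$. Integrating in spherical coordinates,
\begin{equation*}
\int_{|x|\le R}\bar{u}_0\,dx \;\ge\; A'\int_{R_1\le |x|\le R}|x|^{-q}\,dx
\;=\;\frac{A'\omega_n}{n-q}\bigl(R^{n-q}-R_1^{n-q}\bigr)
\end{equation*}
for all $R\ge R_1$. Dividing by $R^{n-2/(1-m)}$ and using $n-q>n-2/(1-m)$, the right-hand side tends to $+\infty$ as $R\to\infty$. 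For the perturbation $\phi$, since $\phi\in L^1(\R^n)$ the integral $\int_{|x|\le R}|\phi|\,dx$ is bounded by $\|\phi\|_{L^1(\R^n)}$ uniformly in $R$; in the strict case $m<(n-2)/n$ this contribution vanishes after dividing by $R^{n-2/(1-m)}$, while in the borderline case $m=(n-2)/n$ the exponent is $0$ and the $\phi$-contribution stays bounded, which is dominated by the divergent $\bar{u}_0$-term. Combining both pieces,
\begin{equation*}
\liminf_{R\to\infty}\frac{1}{R^{n-\frac{2}{1-m}}}\int_{|x|\le R}u_0\,dx=\infty,
\end{equation*}
so \eqref{initial-value-average-infty} is satisfied.

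Applying Theorem~\ref{existence-thm} now yields a unique global positive solution $u\in C^\infty(\R^n\times(0,\infty))$ of \eqref{fast-diff-IVP} satisfying the Aronson--B\'enilan bound \eqref{aronson-benilan}, which is exactly the claim. There is no genuine obstacle here: the proof is essentially a routine reduction to Theorem~\ref{existence-thm}, and the only mildly delicate point is keeping track of the borderline exponent case $m=(n-2)/n$, where $n-2/(1-m)=0$ and one must note that the bounded $L^1$-mass of $\phi$ does not destroy the divergence forced by the tail decay $|x|^{-q}$ of $\bar{u}_0$.
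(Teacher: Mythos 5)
Your proposal is correct and follows essentially the same route as the paper's own proof: both split $u_0 = \2{u}_0 + \phi$, use the tail lower bound $\2{u}_0 \ge c|x|^{-q}$ for $|x|\ge R_1$ together with $q < 2/(1-m) \le n$ to show the averaged integral diverges, bound the $\phi$-contribution by $\|\phi\|_{L^1}$, and then invoke Theorem~\ref{existence-thm}. The only cosmetic difference is that you work with an arbitrary $A'<A$ and discuss the borderline case $m=(n-2)/n$ separately, whereas the paper takes $A/2$ and handles the exponent $2/(1-m)-n\le 0$ in a single displayed estimate.
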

\begin{proof}
By \eqref{u0-bar-infty2} there exists a constant $R_1>0$ such that
$$
\2{u}_0(x)\ge (A/2)|x|^{-q}\quad\forall |x|\ge R_1.
$$
Since $q<2/(1-m)\le n$,
\begin{align*}
\frac{1}{R^{n-\frac{2}{1-m}}}\int_{B_R}u_0\,dx
\ge&\frac{1}{R^{n-\frac{2}{1-m}}}\int_{B_R}\2{u}_0\,dx
-\frac{1}{R^{n-\frac{2}{1-m}}}\|\phi\|_{L^1}\\
\ge&\frac{\omega_nA}{2(n-q)}(R^{\frac{2}{1-m}-q}-R^{\frac{2}{1-m}-n}R_1^{n-q})
-R^{\frac{2}{1-m}-n}\|\phi\|_{L^1}\quad\forall R>R_1\\
\to&\infty\quad\mbox{ as }R\to\infty.
\end{align*}
Hence by Theorem~\ref{existence-thm} \eqref{fast-diff-IVP} has a unique 
global solution $u$ in $\R^n\times (0,\infty)$ which satisfies 
\eqref{aronson-benilan} in $\R^n\times (0,\infty)$ and the corollary 
follows.
\end{proof}

\begin{cor}\label{existence-global-soln2}
Let $n\ge 3$, $0<m\le (n-2)/n$, $q<2/(1-m)$, and $p$ satisfy 
\eqref{p-cond}. Suppose $0\le u_0\in L_{loc}^p(\R^n)$ satisfies
$u_0(x)\ge C_0/|x|^q$ for any $|x|\ge R_1$ and some constants 
$C_0>0$, $R_1>0$. Then there exists a unique global solution $u$ 
of \eqref{fast-diff-IVP} in $\R^n\times (0,\infty)$ which satisfies 
\eqref{aronson-benilan} in $\R^n\times (0,\infty)$.  
\end{cor}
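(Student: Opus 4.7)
The plan is to reduce the statement directly to Corollary~\ref{global-existence-cor} by writing $u_0$ as the sum of a tail piece, which carries the pointwise decay rate $|x|^{-q}$, and a compactly supported remainder, which is absorbed into $L^1(\R^n)\cap L^p(\R^n)$. No further analysis of the PDE is needed, since once the hypotheses of Corollary~\ref{global-existence-cor} are verified for this decomposition the conclusion (existence, uniqueness, and the Aronson--B\'enilan estimate) is immediate.

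Concretely, I would set
$$
\phi:=u_0\chi_{B_{R_1}},\qquad \2{u}_0:=u_0\chi_{\R^n\setminus B_{R_1}},
$$
so that $u_0=\2{u}_0+\phi$ with both summands nonnegative. Since $u_0\in L^p_{loc}(\R^n)$ and $B_{R_1}$ is bounded, $\phi\in L^p(\R^n)$ has compact support; and because the assumption $p>\max(1,(1-m)n/2)$ forces $p>1$, H\"older's inequality applied on $B_{R_1}$ yields
$$
\int_{\R^n}\phi\,dx=\int_{B_{R_1}}u_0\,dx
\le |B_{R_1}|^{1-1/p}\|\phi\|_{L^p(\R^n)}<\infty,
$$
so $\phi\in L^1(\R^n)\cap L^p(\R^n)$. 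Meanwhile $\2{u}_0\in L^p_{loc}(\R^n)$ as a truncation of $u_0$.

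Next I would verify the far-field hypothesis \eqref{u0-bar-infty2}. For $|x|\ge R_1$ we have $\2{u}_0(x)=u_0(x)\ge C_0|x|^{-q}$, hence $|x|^q\2{u}_0(x)\ge C_0$ for all such $x$, and therefore
$$
\liminf_{|x|\to\infty}|x|^q\2{u}_0(x)\ge C_0>0.
$$
This is exactly the hypothesis of Corollary~\ref{global-existence-cor} with $A=C_0$. Applying that corollary produces the unique global solution $u$ of \eqref{fast-diff-IVP} in $\R^n\times(0,\infty)$ satisfying \eqref{aronson-benilan}, which is what is to be proved.

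There is no real obstacle here: the only substantive check is that the compactly supported piece $\phi$ actually lies in $L^1(\R^n)\cap L^p(\R^n)$, which uses the fact that $p>1$ is implied by \eqref{p-cond}. Everything else is just bookkeeping, and the conclusion is a direct invocation of the previously established corollary.
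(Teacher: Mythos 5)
Your proposal is correct and takes the route the paper implicitly intends: Corollary~\ref{existence-global-soln2} is stated without proof as an immediate consequence of Corollary~\ref{global-existence-cor}, and the truncation $\phi=u_0\chi_{B_{R_1}}$, $\2{u}_0=u_0\chi_{\R^n\setminus B_{R_1}}$ (with the H\"older step to get $\phi\in L^1$, using $p>1$ from \eqref{p-cond}) is exactly the natural verification of its hypotheses with $A=C_0$.
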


\begin{lem}\label{psi-upper-bd-lem}
Let $n\ge 3$, $0<m\le (n-2)/n$, and $q<n/p$ for some constant $p$ 
satisfying \eqref{p-cond}. Let $\psi$ be the 
unique solution of \eqref{fast-diff-IVP} in $\R^n\times (0,\infty)$ 
with initial value $A|x|^{-q}$ which satisfies \eqref{aronson-benilan} 
in $\R^n\times (0,\infty)$ given by Corollary~\ref{existence-global-soln2}.
Then
\begin{equation}\label{psi-upper-bd}
\psi(x,t)\le A|x|^{-q}\quad\forall x\ne 0,t>0.
\end{equation}
\end{lem}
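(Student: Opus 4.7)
The plan is to identify $f(x):=A|x|^{-q}$ as a time-independent classical supersolution of \eqref{fast-diff-eqn} off the origin, and to propagate this bound to $\psi$ through the construction of $\psi$ as a monotone limit of Dirichlet approximants with bounded initial data. The supersolution check is a direct computation: using $\Delta|x|^{-\alp}=\alp(\alp-(n-2))|x|^{-\alp-2}$ with $\alp=qm$,
\[
\frac{n-1}{m}\Delta f^m=(n-1)A^m q\bigl(qm-(n-2)\bigr)|x|^{-qm-2}.
\]
The standing hypotheses $p>(1-m)n/2$ and $m\le(n-2)/n$ yield $q<n/p<2/(1-m)\le(n-2)/m$, so $qm<n-2$ and hence $\Delta f^m\le 0$; thus $f_t-\frac{n-1}{m}\Delta f^m\ge 0$ classically on $(\R^n\setminus\{0\})\times(0,\infty)$.

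Next I would approximate $\psi$ from below. By Method 2 in the proof of Theorem~\ref{existence-finite-time}, together with Theorem~\ref{existence-finite-time-3}, $\psi=\lim_{R\to\infty}w_R$ on $\R^n\times(0,\infty)$, where $w_R$ is the weak solution of \eqref{Dirichlet-problem-1} on $B_{5R}$ with zero boundary data and initial data $A|x|^{-q}\chi_{B_{2R}}$. To further truncate the singular initial datum, for $\3\in(0,R)$ let $w_R^{(\3)}$ denote the (classical, bounded by $A\3^{-q}$ via the maximum principle) Dirichlet solution with the bounded initial data $A\min(|x|^{-q},\3^{-q})\chi_{B_{2R}}(x)$. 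By the Benilan--Crandall $L^1$-stability of weak solutions \cite{BC} (cf.~\cite{BV}), $w_R^{(\3)}\nearrow w_R$ pointwise as $\3\to 0$.

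The core comparison then occurs on the annular cylinder $D:=(B_{5R}\setminus\2{B_\3})\times(0,T_{R,\3})$, where $T_{R,\3}$ is the extinction time of $w_R^{(\3)}$. On each piece of the parabolic boundary of $D$ one checks $w_R^{(\3)}\le f$: at $t=0$ with $\3\le|x|\le 5R$ one has $w_R^{(\3)}(x,0)\le A|x|^{-q}=f(x)$; on $\1 B_{5R}$ one has $w_R^{(\3)}=0\le A(5R)^{-q}=f$; on $\1 B_\3$ one has $w_R^{(\3)}\le A\3^{-q}=f$. Since $f$ is a classical supersolution and $w_R^{(\3)}$ is a classical solution on $D$ (the positivity lower bound from Lemma~\ref{positive-lem} combined with the $L^\infty$-bound making \eqref{fast-diff-eqn} uniformly parabolic there), the classical parabolic comparison principle (Lemma~2.3 of \cite{DaK}) yields $w_R^{(\3)}\le f$ in $D$. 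Letting $\3\to 0$ and then $R\to\infty$, and noting that $q<2/(1-m)$ forces \eqref{initial-value-average-infty} for $u_0=A|x|^{-q}$ (since $\int_{B_R}u_0\,dx\sim R^{n-q}$ and $n-q>n-2/(1-m)$), so that $T_R\to\infty$ and $w_R\nearrow\psi$ uniformly on compacts of $\R^n\times(0,\infty)$, one arrives at $\psi(x,t)\le A|x|^{-q}$ for all $x\ne 0$ and $t>0$.

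The main obstacle I expect is the rigorous justification of the two limit passages: the monotone convergence $w_R^{(\3)}\nearrow w_R$ as $\3\to 0$ (together with the matching behavior $T_{R,\3}\to T_R$ of extinction times, needed to ensure that the pointwise bound survives the limit) and then $w_R\nearrow\psi$ as $R\to\infty$. The first rests on the Benilan--Crandall stability theory in the $L^1$-contraction framework of \cite{BC} and \cite{BV}, already implicit in Lemma~\ref{positive-lem}; the second is built into the construction of $\psi$ given by Method 2 of Theorem~\ref{existence-finite-time}. Once these are granted, the remaining work is the supersolution computation and an application of standard parabolic comparison on a uniformly parabolic annular cylinder.
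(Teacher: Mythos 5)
Your plan is correct and matches the spirit of the paper's proof: treat $A|x|^{-q}$ as a time-independent supersolution away from the origin, approximate $\psi$ from below by Dirichlet solutions on balls with truncated initial data, compare on annular cylinders that avoid the singularity, and pass to the limit. The paper implements the same idea with a slightly different choice of approximants: it uses $v_{R,k}$ solving the Dirichlet problem on $B_R$ with boundary data $AR^{-q}$ (so the barrier $A|x|^{-q}$ and the solution agree exactly on $\1 B_R$) and initial data $\min(A|x|^{-q},k)$, takes $k\to\infty$, then $R\to\infty$ by invoking Theorem~\ref{existence-finite-time-3}. Your version with zero boundary data and the cutoff $A\min(|x|^{-q},\3^{-q})\chi_{B_{2R}}$ works equally well and is covered by the same convergence theorem.

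One small misattribution: you invoke Lemma~\ref{positive-lem} for strict positivity of $w_R^{(\3)}$ on the annular cylinder, but that lemma requires the initial datum to satisfy \eqref{initial-value-average-lower-bd-0}, which the compactly supported truncation $A\min(|x|^{-q},\3^{-q})\chi_{B_{2R}}$ does not. The positivity you need (strict positivity of $w_R^{(\3)}$ on compact subsets of $B_{5R}\times(0,T_{R,\3})$) is instead the standard instantaneous positivity for the fast diffusion Dirichlet problem with non\-trivial nonnegative $L^1$ data, available from \cite{BC} and \cite{BV} (or from the lower Harnack estimate (1.28) of \cite{BV} applied directly to $w_R^{(\3)}$); alternatively, following the paper's route of using bounded initial data and boundary data $AR^{-q}>0$ gives the positivity for free from the maximum principle and avoids the issue entirely. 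Aside from that citation, the supersolution computation, the boundary checks on the annulus, and the two limit passages are all in order.
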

\begin{proof}
For any $R>0$, $k>AR^{-q}$, let $v_{R,k}$ be the solution of the problem
\begin{equation*}
\left\{\begin{aligned}
v_t=&\frac{n-1}{m}\Delta v^m\qquad\quad\mbox{ in }Q_R\\
v=&AR^{-q}\quad\qquad\qquad\mbox{ on }\1 B_R\times (0,\infty)\\
v(x,0)=&\min (A|x|^{-q},k)\quad\mbox{ in }B_R.
\end{aligned}\right.
\end{equation*}
Then by the maximum principle (Lemma 2.3 of \cite{DaK}), 
\begin{equation}\label{vr-lower-bd-2}
AR^{-q}\le v_{R,k}\le v_{R,k'}\le k'\quad\mbox{ in }Q_R
\quad\forall k'>k>AR^{-q},R>0.
\end{equation}  
By Theorem~\ref{L-infinity-initial-Lp-bd2} for any $t_2>t_1>0$, $R>0$,
there exists a constant $M>0$ such that
\begin{equation}\label{vr-upper-bd-2}
v_{R,k}\le M\quad\mbox{ in }\2{B}_R\times [t_1,t_2]
\quad\forall k>AR^{-q}.
\end{equation}
By \eqref{vr-lower-bd-2}, \eqref{vr-upper-bd-2}, and an argument similar
to the proof of Theorem~\ref{existence-thm}, $v_{R,k}$ increases and 
converges uniformly on every compact subset of $\2{B}_R\times (0,\infty)$ 
as $k\to\infty$ to the solution $v_R$ of 
\begin{equation*}
\left\{\begin{aligned}
v_t=&\frac{n-1}{m}\Delta v^m\quad\mbox{ in }Q_R\\
v=&AR^{-q}\qquad\quad\mbox{ on }\1{B}_R\times (0,\infty)\\
v(x,0)=&A|x|^{-q}\qquad\,\,\,\mbox{ in }B_R.
\end{aligned}\right.
\end{equation*}
Let $t_2>t_1>0$. For any $k_0>AR^{-q}$ we choose $\3_0>0$ such that 
$k_0<A\3_0^{-q}$. Then by the maximum principle,
\begin{align}
&v_{R,k}\le A|x|^{-q}\quad\forall\3\le|x|\le R,t>0,k\ge k_0,0<\3<\3_0\notag\\
\Rightarrow\quad&v_R\le A|x|^{-q}\quad\forall 0<|x|\le R,t>0\quad\mbox{ as }
k\to\infty.\label{vr-ar-q-bd}
\end{align}
By Theorem~\ref{existence-finite-time-3} $v_R$ converges to $\psi$ uniformly
on every compact subset of $\R^n\times (0,\infty)$ as $R\to\infty$. 
Letting $R\to\infty$ in \eqref{vr-ar-q-bd} we get \eqref{psi-upper-bd}
and the lemma follows.
\end{proof}

\begin{thm}\label{existence-self-similar-soln}
Let $n\ge 3$, $0<m\le (n-2)/n$, and $q<n/p$ for some constant $p$ satisfying
\eqref{p-cond}. Let $\alpha$, $\beta$, be given by \eqref{alpha-beta-defn}.
Then there exists a unique radially symmetric self-similar solution $\psi$ of 
\eqref{fast-diff-IVP} in $\R^n\times (0,\infty)$ with initial value 
$A|x|^{-q}$ which satisfies \eqref{aronson-benilan} and 
\eqref{psi-upper-bd} in $\R^n\times (0,\infty)$ and
\begin{equation}\label{self-similar-eqn}
\psi(x,t)=\gamma^q\psi(\gamma x,\gamma^{\frac{1}{\beta}}t)\quad\forall 
x\in\R^n,t>0,\gamma>0.
\end{equation}
In particular,
\begin{equation}\label{self-similar-eqn-2}
\psi(x,t)=t^{-\alpha}\psi (t^{-\beta}x,1)\quad\forall x\in\R^n,t>0.
\end{equation}
Moreover $\4{v}(x)=\psi (x,1)$ satisfies \eqref{elliptic} in $\R^n$.
If in addition $0<m<(n-2)/n$, then $\psi\in 
C((\R^n\setminus\{0\})\times (0,\infty))$ and 
\begin{equation}\label{psi-x-infty}
|x|^q\psi(x,t)\to A\quad\mbox{ uniformly on }[0,T]\quad\mbox{ as }
|x|\to\infty.
\end{equation}
\end{thm}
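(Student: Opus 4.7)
The plan is to construct $\psi$ directly from the existence and upper-bound results already in hand, use rotational symmetry together with the one-parameter scaling invariance of \eqref{fast-diff-eqn} to force \eqref{self-similar-eqn} and \eqref{elliptic}, and then reduce the asymptotic claim to the behaviour of $\psi$ at the initial time along a fixed ray.

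First I would apply Corollary~\ref{existence-global-soln2} to $u_0(x):=A|x|^{-q}$ (with $\2{u}_0=u_0$, $\phi=0$; note $u_0\in L^p_{loc}(\R^n)$ since $qp<n$) to obtain the unique global positive solution $\psi\in C^\infty(\R^n\times(0,\infty))$ of \eqref{fast-diff-IVP} satisfying \eqref{aronson-benilan}; Lemma~\ref{psi-upper-bd-lem} then gives \eqref{psi-upper-bd}. Radial symmetry follows from uniqueness (Theorem~2.3 of \cite{HP}) applied to the composition $(x,t)\mapsto\psi(Ox,t)$ for any orthogonal matrix $O$, which solves the same Cauchy problem as $\psi$ because $A|x|^{-q}$ is rotation-invariant.

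For the self-similarity, a direct computation shows that for every $\gamma>0$ the function $\psi_\gamma(x,t):=\gamma^q\psi(\gamma x,\gamma^{1/\beta}t)$ is again a classical solution of \eqref{fast-diff-eqn}, the exponent $1/\beta=2-q(1-m)$ being exactly what balances the scaling on the two sides; since $\psi_\gamma(x,0)=\gamma^q A|\gamma x|^{-q}=A|x|^{-q}=\psi(x,0)$, uniqueness gives $\psi_\gamma\equiv\psi$, which is \eqref{self-similar-eqn}, and taking $\gamma=t^{-\beta}$ yields \eqref{self-similar-eqn-2}. Plugging the ansatz $\psi(x,t)=t^{-\alpha}\4{v}(t^{-\beta}x)$ into \eqref{fast-diff-eqn} and evaluating at $t=1$ produces \eqref{elliptic} for $\4{v}$.

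The regularity claim $\psi\in C((\R^n\setminus\{0\})\times(0,\infty))$ is immediate from the $C^\infty$-smoothness above. For the uniform convergence, combining radial symmetry with \eqref{self-similar-eqn} at $\gamma=1/|x|$ gives the identity
\[
|x|^q\psi(x,t)=\psi\!\left(\tfrac{x}{|x|},\,t|x|^{-1/\beta}\right)=\Phi(t|x|^{-1/\beta}),\qquad\Phi(s):=\psi(e,s),
\]
for any fixed unit vector $e$. Since $t|x|^{-1/\beta}\to 0$ uniformly in $t\in[0,T]$ as $|x|\to\infty$, it suffices to show $\lim_{s\to 0^+}\Phi(s)=A$. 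The estimate $\Phi(s)\le A$ is immediate from \eqref{psi-upper-bd}. For the matching lower bound I would approximate $u_0$ from below by the bounded continuous radial data $u_{0,k}(x):=\min(A|x|^{-q},k)$, which still satisfies \eqref{initial-value-average-infty} since $q<n/p<2/(1-m)$ follows from $p>(1-m)n/2$; Theorem~\ref{existence-thm} then supplies a global solution $\psi_k$, the $L^1$-comparison principle of \cite{HP} gives $\psi_k\le\psi$, and since $u_{0,k}$ is bounded and continuous at $e$ with value $A$ once $k>A$, standard parabolic theory for fast diffusion with bounded continuous initial data extends $\psi_k$ continuously to $(e,0)$ with value $A$, so $\liminf_{s\to 0^+}\Phi(s)\ge\psi_k(e,0)=A$. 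The principal obstacle is promoting this continuity up to $t=0$ for the bounded sub-solutions $\psi_k$—essentially, upgrading the $L^1_{loc}$-convergence $\psi_k(\cdot,t)\to u_{0,k}$ to pointwise convergence at continuity points of the datum, which requires local $L^\infty$-bounds up to $t=0$ away from the origin (obtainable from the pointwise upper bound applied to $\psi_k$) combined with the parabolic Schauder estimates to obtain equicontinuity up to the initial time on an annular neighbourhood of $e$.
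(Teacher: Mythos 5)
Your construction, symmetry, scaling, and elliptic-equation arguments match the paper step for step, and the reduction of \eqref{psi-x-infty} via $\gamma=1/|x|$ to the single claim $\psi(e,s)\to A$ as $s\to 0^+$ is exactly the paper's reduction. The deduction $\Phi(s)\le A$ from \eqref{psi-upper-bd} is also fine.

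The genuine gap is in the matching lower bound $\liminf_{s\to 0^+}\Phi(s)\ge A$, and you have in fact flagged it yourself as "the principal obstacle" without resolving it. Your route---truncate the data to $u_{0,k}=\min(A|x|^{-q},k)$, compare $\psi_k\le\psi$, and appeal to "standard parabolic theory for bounded continuous initial data"---does not close because in the very fast diffusion range $0<m<(n-2)/n$ boundedness of the initial datum does \emph{not} by itself give interior positivity, let alone uniformly as $t\to 0$; the uniform parabolicity that your Schauder/equicontinuity step needs requires a quantitative \emph{lower} barrier up to $t=0$ on an annulus around $e$, and you only supply the upper one ($\psi_k\le\psi\le A|x|^{-q}$). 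The paper fills exactly this hole with an explicit Barenblatt subsolution: choosing $T$ and $k$ so that $B_k(\cdot,0)\le A|x|^{-q}$ (a Young-inequality computation) gives $\psi\ge B_k$ by comparison, and $B_k$ is bounded below by a positive constant on $B_{R_1}(x_0)\times[0,T]$ for each $x_0\ne 0$; with the two-sided bound in hand the paper then invokes the Dahlberg--Fabes--Kenig Fatou-type argument to get $\psi(x_0,t)\to A|x_0|^{-q}$. You should replace the $\psi_k$-approximation with this Barenblatt comparison (or supply an equivalent explicit subsolution), otherwise the lower bound step is unsupported. Note also that the comparison $\psi_k\le\psi$ is better justified through the Dirichlet approximations on balls together with the maximum principle (Lemma~2.3 of \cite{DaK} / Lemma~2.9 of \cite{Hu2}), rather than by citing an ``$L^1$-comparison principle of \cite{HP}'' whose scope in the regime $m\le (n-2)/n$ is exactly what is in question; the paper only uses \cite{HP} for uniqueness. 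Finally, you read $\psi\in C((\R^n\setminus\{0\})\times(0,\infty))$ as already following from $C^\infty$-smoothness in the open slab, but the content the paper actually proves (and that feeds the final uniform-convergence step) is continuity up to $t=0$ for $x\ne 0$, i.e.\ \eqref{psi-bdary-value}; this is not immediate and is precisely what the Barenblatt barrier plus \cite{DaFK} deliver.
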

\begin{proof}
By Corollary~\ref{existence-global-soln2} and Lemma~\ref{psi-upper-bd-lem}
there exists a unique solution $\psi$ of \eqref{fast-diff-IVP} in 
$\R^n\times (0,\infty)$ with initial value $A|x|^{-q}$ which satisfies 
\eqref{aronson-benilan} and \eqref{psi-upper-bd} in $\R^n\times (0,\infty)$. 
Since $\psi (F(x),t)$ is also a solution of 
\eqref{fast-diff-eqn} in $\R^n\times (0,\infty)$ with initial 
value $A|x|^{-q}$ for any rotation $F:\R^n\to\R^n$, by uniqueness of 
solution,  
$$
\psi (x,t)=\psi(F(x),t)\quad\forall x\in\R^n,t\ge 0.
$$
Hence $\psi (x,t)$ is radially symmetric in $x$.
For any $\gamma>0$, let $\psi_{\gamma}(x,t)=\gamma^q\psi
(\gamma x,\gamma^{\frac{1}{\beta}}t)$ where $\beta$ is given by 
\eqref{alpha-beta-defn}. Then $\psi_{\gamma}$ also satisfies 
\eqref{fast-diff-eqn} in $\R^n\times (0,\infty)$ with initial
value $A|x|^{-q}$. By the uniqueness of solution, 
\begin{equation*}
\psi(x,t)=\psi_{\gamma}(x,t)\quad\forall 
x\in\R^n,t>0,\gamma>0
\end{equation*}
and \eqref{self-similar-eqn} follows. Hence $\psi$ is the unique radially 
symmetric self-similar solution of 
\eqref{fast-diff-IVP} in $\R^n\times (0,\infty)$ with initial value 
$A|x|^{-q}$ which satisfies \eqref{aronson-benilan} in $\R^n\times 
(0,\infty)$. Putting $\gamma=t^{-\beta}$ in \eqref{self-similar-eqn},
\begin{equation}\label{v-tilde-eqn}
\psi(x,t)=t^{-\alpha}\psi (t^{-\beta}x,1)=t^{-\alpha}\4{v}(t^{-\beta}x)
\quad\forall x\in\R^n,t>0
\end{equation}
and \eqref{self-similar-eqn-2} follows. Substituting \eqref{v-tilde-eqn} 
into \eqref{fast-diff-eqn}, we get that $\4{v}$ satisfies \eqref{elliptic} 
in $\R^n$. 

We now let $0<m<(n-2)/n$. We will use a modification of the proof of 
Corollary 2.8 and Corollary 2.9 of \cite{Hu1} to show that $\psi$ 
satisfies
\begin{equation}\label{psi-bdary-value}
\psi (|x_0|,t)=\psi (x_0,t)\to A|x_0|^{-q}\quad\mbox{ as }t\to 0\quad
\forall x_0\ne 0.
\end{equation}
We will first compare 
$\psi$ with the Barenblatt solution $B_k$ given by \eqref{Barenblatt} for some 
constants $T>0$, $k>0$, to be determined later. Since by \eqref{p-cond}
$q<n/p<2/(1-m)$, by the Young inequality,
\begin{align}\label{xq-ineqn}
|x|^q=&(T^{\frac{1}{n-2-nm}}|x|)^qT^{-\frac{q}{n-2-nm}}\notag\\
\le&(q(1-m)/2)(T^{\frac{1}{n-2-nm}}|x|)^{\frac{2}{1-m}}
+(1-(q(1-m)/2))T^{-\frac{2\alpha}{n-2-nm}}\notag\\
\le&q(1-m)(k+T^{\frac{2}{n-2-nm}}|x|^2)^{\frac{1}{1-m}}
\end{align}
where $\alpha$ is given by \eqref{alpha-beta-defn} and 
$k=(2q^{-1}(1-m)^{-1}-1)^{1-m}T^{-\frac{2\alpha(1-m)}{n-2-nm}}$.
Let 
$$
T=\left(\frac{A}{q(1-m)C_{\ast}^{\frac{1}{1-m}}}\right)^{\frac{n-2-mn}{n}}
$$ 
where
$C_{\ast}=2(n-1)(n-2-nm)/(1-m)$. Then by \eqref{xq-ineqn},
\begin{equation}\label{u0-Bk-compare}
B_k(x,0)\le A|x|^{-q}\quad\forall x\in\R^n.
\end{equation}
Let $u_R$ be the solution of \eqref{infinity-bd-problem} in $Q_R$
with $\Omega=B_R$ and initial value $A|x|^{-q}$. By \eqref{u0-Bk-compare} 
and the maximum principle (\cite{Hu2}),
\begin{equation}\label{ur-Bk-compare}
u_R(x,t)\ge B_k(x,t)\quad\mbox{ in }Q_R.
\end{equation}
Since by the proof of Theorem~\ref{existence-finite-time} $u_R$ converges 
uniformly on every compact subset of $\R^n\times (0,\infty)$ to $\psi$ 
as $R\to\infty$, letting $R\to\infty$ in \eqref{ur-Bk-compare},
\begin{equation}\label{psi-Bk-compare}
\psi(x,t)\ge B_k(x,t)\quad\forall x\in\R^n,t>0.
\end{equation}
For any $x_0\ne 0$, let $R_1=|x_0|/2$. By \eqref{psi-upper-bd} and
\eqref{psi-Bk-compare} there exist constants $C_2>0$, $C_3>0$, such that
\begin{equation}\label{psi-u-l-bd}
C_2\le\psi (x,t)\le C_3\quad\forall x\in B_{R_1}(x_0),t>0.
\end{equation}
Since $A|x|^{-q}$ is continuous for all $x\ne 0$, by \eqref{psi-u-l-bd}
and an argument similar to that of \cite{DaFK} \eqref{psi-bdary-value}
follows.
Hence putting $\gamma=1/|x|$, $x\ne 0$, into \eqref{self-similar-eqn} 
by \eqref{psi-bdary-value} we have
$$
|x|^q\psi (x,t)=\psi (1,t/|x|^{\frac{1}{\beta}})\to A
$$
uniformly on $[0,T]$ as $|x|\to\infty$ for any $T>0$ and the theorem 
follows.
\end{proof}

\section{Asymptotic behaviour of solutions}
\setcounter{equation}{0}
\setcounter{thm}{0}

In this section we will prove the asymptotic large time behaviour of 
the global solution of \eqref{fast-diff-IVP}. 

\noindent{\ni{\it Proof of Theorem~\ref{asymptotic-thm}}:}
We will use a modification of the proof Theorem 2.1 of \cite{Hs1} to prove
the theorem. For any $\gamma\ge 1$, let $u_{\gamma}(x,t)=\gamma^qu(\gamma x,
\gamma^{\frac{1}{\beta}}t)$, $u_{0,\gamma}(x)=\gamma^qu_0(\gamma x)$, $
\2{u}_{0,\gamma}(x)=\gamma^q\2{u}_0(\gamma x)$, and 
$\phi_{\gamma}(x)=\gamma^q\phi(\gamma x)$. Let $C_1>0$ be as in the proof of
Lemma~\ref{positive-lem} and Theorem~\ref{existence-finite-time}. Then
\begin{equation}\label{initial-u-gamma-value}
u_{\gamma}(x,0)=u_{0,\gamma}(x)=\2{u}_{0,\gamma}(x)+\phi_{\gamma}(x).
\end{equation}
By the same computation as the proof of Theorem 2.1 of \cite{Hs1} we get
\begin{equation}\label{phi-lp-bd}
\|\phi_{\gamma}\|_{L^p}\le\|\phi\|_{L^p}\quad\forall\gamma\ge 1, 
\end{equation}
\begin{equation}\label{u0-bar-gamma-lp-limit}
\int_{B_R}\2{u}_{0,\gamma}^p\,dx\le C(1+R^{n-pq})\quad\forall R>0,\gamma\ge 1,
\end{equation}
and
\begin{equation}\label{phi-l1-limit}
\|\phi_{\gamma}\|_{L^1}\le\gamma^{q-n}\|\phi\|_{L^1}\to 0\quad\mbox{ as }
\gamma\to\infty.
\end{equation}
By \eqref{u0-bar-infty} for any $0<\3<A$ there exists a constant 
$R_{\3}>0$ such that
\begin{align}\label{u0-gamma-bar-u-l-bd}
&(A-\3)|x|^{-q}\le\2{u}_0(x)\le(A+\3)|x|^{-q}\quad\forall |x|\ge R_{\3}
\notag\\
\Rightarrow\quad&(A-\3)|x|^{-q}\le\2{u}_{0,\gamma}(x)\le(A+\3)|x|^{-q}
\quad\forall |x|\ge R_{\3}/\gamma.
\end{align}
Hence by \eqref{u0-bar-gamma-lp-limit} and \eqref{u0-gamma-bar-u-l-bd},
\begin{align}\label{u0-gamma-bar-integral-ineqn}
&\int_{|x|\le R}|\2{u}_{0,\gamma}(x)-A|x|^{-q}|\,dx\notag\\
\le&\3 A\int_{R_{\3}/\gamma\le |x|\le R}|x|^{-q}\,dx
+C(R_{\3}/\gamma)^{n(1-\frac{1}{p})}
\left(\int_{|x|\le R_{\3}/\gamma}\2{u}_{0,\gamma}^p\,dx\right)^{\frac{1}{p}}
+A\int_{|x|\le R_{\3}/\gamma}|x|^{-q}\,dx\notag\\
\le&\3 A\int_{|x|\le R}|x|^{-q}\,dx
+C(R_{\3}/\gamma)^{n(1-\frac{1}{p})}(1+(R_{\3}/\gamma)^{n-pq})^{\frac{1}{p}}
+A\int_{|x|\le R_{\3}/\gamma}|x|^{-q}\,dx\quad\forall R>0,\gamma>R_{\3}/R.
\end{align}
Letting first $\gamma\to\infty$ and then $\3\to 0$ in 
\eqref{u0-gamma-bar-integral-ineqn}, by \eqref{phi-l1-limit} 
we get
\begin{align}
&\lim_{\gamma\to\infty}\int_{|x|\le R}|\2{u}_{0,\gamma}(x)-A|x|^{-q}|\,dx=0
\quad\forall R>0\notag\\
\Rightarrow\quad&\lim_{\gamma\to\infty}
\int_{|x|\le R}|u_{0,\gamma}(x)-A|x|^{-q}|\,dx=0
\quad\forall R>0.\label{u0-gamma-bar-limit}
\end{align}
By \eqref{initial-u-gamma-value}, \eqref{phi-lp-bd}, 
\eqref{u0-bar-gamma-lp-limit}, and Corollary~\ref{L-infinity-initial-Lp-bd1},
for any constants $R>0$, $t_2>t_1>0$, there exists a constant $C_2>0$ such 
that
\begin{equation}\label{u-gamma-upper-bd}
u_{\gamma}(x,t)\le C_2\quad\forall |x|\le R, t_1\le t\le t_2.
\end{equation}
Let $\delta$, $R_0$, be as in Lemma~\ref{positive-lem}. Let $t_2>t_1>0$. 
Then by \eqref{phi-l1-limit},
\begin{align}\label{u0-integral-lower-bd}
\frac{1}{R^{n-\frac{2}{1-m}}}\int_{|x|\le R}u_{0,\gamma}\,dx
\ge&\frac{1}{R^{n-\frac{2}{1-m}}}\left(\int_{|x|\le R}A|x|^{-q}\,dx
-\int_{|x|\le R}|\2{u}_{0,\gamma}(x)-A|x|^{-q}|\,dx
-\|\phi_{\gamma}\|_{L^1(\R^n)}\right)\notag\\
\ge&\frac{A\omega_n}{n-q}R^{\frac{2}{1-m}-q}-\frac{1}{R^{n-\frac{2}{1-m}}}
\left(\int_{|x|\le R}|\2{u}_{0,\gamma}(x)-A|x|^{-q}|\,dx
+\gamma^{q-n}\|\phi\|_{L^1(\R^n)}\right).
\end{align}
Since $q<n/p<2/(1-m)$, there exists a constant $R_1>R_0$ such that 
\begin{equation}\label{R-power-infty}
\frac{A\omega_n}{n-q}R^{\frac{2}{1-m}-q}
\ge\frac{2C_1t_2^{\frac{1}{1-m}}}{(1-\delta)^{n-\frac{2}{1-m}}}\quad
\forall R\ge R_1.
\end{equation}
Let $R\ge R_1$. By \eqref{u0-gamma-bar-limit} there exists $\gamma_R>1$
such that the last term on the right hand side of 
\eqref{u0-integral-lower-bd} is less than 
\begin{equation}\label{l1-small}
\le C_1t_2^{\frac{1}{1-m}}\quad\forall \gamma\ge\gamma_R.
\end{equation}
Let $|x_0|\le\delta R$. Then by \eqref{u0-integral-lower-bd}, 
\eqref{R-power-infty}, and \eqref{l1-small},
\begin{align}\label{u0-integral-lower-bd2}
\frac{1}{R^{n-\frac{2}{1-m}}}\int_{B_R(x_0)}u_{0,\gamma}\,dx
\ge\frac{1}{R^{n-\frac{2}{1-m}}}\int_{|x|\le(1-\delta)R}u_{0,\gamma}\,dx
\ge C_1t_2^{\frac{1}{1-m}}\quad\forall R\ge\frac{R_1}{1-\delta},
\gamma\ge\gamma_R.
\end{align}
Let $v_{R,\gamma}$ be the weak solution of \eqref{Dirichlet-problem-1} 
in $Q_{5R}^{T_{R,\gamma}}$ with $u_0$ being replaced by $u_{0,\gamma}$
where $T_{R,\gamma}>0$ is the extinction time of $v_{R,\gamma}$.
Let $\4{v}_{R,\gamma}$ be the weak solution of \eqref{Dirichlet-problem-3}
in $Q_{3R}^{\4{T}_{R,\gamma}}(x_0)$ with $u_0$ being replaced by $u_{0,\gamma}$
where $\4{T}_{R,\gamma}>0$ is the extinction time of $\4{v}_{R,\gamma}$. By 
\eqref{u0-bar-gamma-lp-limit} and an argument similar to that on 
P.232-233 of \cite{P} there exists a constant $C>0$ such that
\begin{equation}\label{extinction-time-upper-bd}
\4{T}_{R,\gamma}\le C\left(\int_{B_{2R}}u_{0,\gamma}^p\,dx\right)^{\frac{2}{n}}
\le C'(1+R^{n-pq})^{\frac{2}{n}}\quad\forall R>0,\gamma\ge 1.
\end{equation}
Since by the maximum principle $u_{\gamma}\ge v_{R,\gamma}\ge \4{v}_{R,\gamma}$ 
in $Q_{3R}^{\4{T}_{R,\gamma}}(x_0)$, by \eqref{u0-integral-lower-bd2}, 
\eqref{extinction-time-upper-bd}, 
and an argument similar to the proof of Lemma~\ref{positive-lem}, 
$$
\4{T}_{R,\gamma}>t_2\quad\forall R\ge R_1/(1-\delta), \gamma\ge\gamma_R,
$$
and there exist constants $C_0'>0$, $C_0''>0$, such that
\begin{align}\label{u-gamma-lower-bd}
\inf_{\2{B_{\delta R}}\times [t_1,t_2]}u_{\gamma}^m
\ge&\inf_{\2{B_{\delta R}}\times [t_1,t_2]}v_{R,\gamma}^m\notag\\
\ge&C_0'\4{T}_{R,\gamma}^{-\frac{1}{1-m}}R^{-\frac{2m}{1-m}}t_1^{\frac{m}{1-m}}
(2t_2^{\frac{m}{1-m}}-t_1^{\frac{m}{1-m}})\notag\\
\ge&C_0''(1+R^{n-pq})^{-\frac{2}{n(1-m)}}R^{-\frac{2m}{1-m}}t_1^{\frac{m}{1-m}}
(2t_2^{\frac{m}{1-m}}-t_1^{\frac{m}{1-m}})>0\quad\forall 
R\ge\frac{R_1}{1-\delta},\gamma\ge\gamma_R.
\end{align}
By \eqref{u-gamma-upper-bd} and \eqref{u-gamma-lower-bd} the equation
\eqref{fast-diff-eqn} for the family of functions 
$\{u_{\gamma}:\gamma\ge\gamma_R\}$ are uniformly parabolic on 
$\2{B_{\delta R}}\times [t_1,t_2]$ for any $R\ge R_1/(1-\delta)$. By the 
parabolic Schauder estimates \cite{LSU} the family of functions 
$\{u_{\gamma}:\gamma\ge\gamma_R\}$ are equi-Holder continuous on 
$\2{B_{\delta R}}\times [t_1,t_2]$ for any $R\ge R_1/(1-\delta)$. 

Let $\{\gamma_i\}_{i=1}^{\infty}$ be a sequence such that $\gamma_i\ge 1$ 
for all $i\in\Z^+$ and $\gamma_i\to\infty$ as $i\to\infty$. Then by the 
Ascoli Theorem and a diagonalization argument the sequence
$\{u_{\gamma_i}\}_{i=1}^{\infty}$ has a subsequence which we may assume 
without loss of generality to be the sequence itself that converges
uniformly on every compact subset of $\R^n\times (0,\infty)$ as
$i\to\infty$ to some solution $\psi$ of \eqref{fast-diff-eqn} that
satisfies \eqref{aronson-benilan} in $\R^n\times (0,\infty)$. 
We will now prove that $\psi$ has initial value $A|x|^{-q}$. For any 
$i,j\in\Z^+$, $R>0$ , $t>0$, 
\begin{align}\label{psi-uj-compare}
&\int_{|x|\le R}|\psi(x,t)-A|x|^{-q}|\,dx\notag\\
\le&\int_{|x|\le R}|\psi(x,t)-u_{\gamma_j}(x,t)|\,dx
+\int_{|x|\le R}|u_{\gamma_j}(x,t)-u_{0,j}|\,dx
+\int_{|x|\le R}|u_{0,j}-A|x|^{-q}|\,dx.
\end{align}
Then similar to \eqref{l1-compare} we have
\begin{align}\label{ui-uj-compare}
&\left(\int_{|x|\le R}|u_{\gamma_i}-u_{\gamma_j}|(x,t)\,dx\right)^{1-m}
\le\left(\int_{|x|\le 2R}|u_{0,\gamma_i}-u_{0,\gamma_j}|\,dx\right)^{1-m}
+CR^{n(1-m)-2}t\notag\\
\Rightarrow\quad&\int_{|x|\le R}|u_{\gamma_i}-u_{\gamma_j}|(x,t)\,dx
\le C'\left(\int_{|x|\le 2R}|u_{0,\gamma_i}-u_{0,\gamma_j}|\,dx
+R^{n-\frac{2}{1-m}}t^{\frac{1}{1-m}}\right)\quad\forall i,j\in\Z^+, 
R>0,t>0.
\end{align}
Letting $i\to\infty$ in \eqref{ui-uj-compare}, by 
\eqref{u0-gamma-bar-limit} we get
\begin{equation}\label{psi-uj-compare2}
\int_{|x|\le R}|\psi-u_{\gamma_j}|(x,t)\,dx
\le C'\left(\int_{|x|\le 2R}|A|x|^{-q}-u_{0,\gamma_j}|\,dx
+R^{n-\frac{2}{1-m}}t^{\frac{1}{1-m}}\right)\quad\forall j\in\Z^+, 
R>0,t>0.
\end{equation}
By \eqref{psi-uj-compare} and \eqref{psi-uj-compare2},
\begin{align}\label{psi-uj-compare3}
&\int_{|x|\le R}|\psi(x,t)-A|x|^{-q}|\,dx\notag\\
\le&\int_{|x|\le R}|u_{\gamma_j}(x,t)-u_{0,j}|\,dx
+C\int_{|x|\le 2R}|u_{0,\gamma_j}-A|x|^{-q}|\,dx+CR^{n-\frac{2}{1-m}}t^{\frac{1}{1-m}}
\quad\forall j\in\Z^+,R>0,t>0.
\end{align}
Letting first $t\to 0$ and then $j\to\infty$ in \eqref{psi-uj-compare3},
by \eqref{u0-gamma-bar-limit} we get
\begin{equation*}
\lim_{t\to 0}\int_{|x|\le R}|\psi(x,t)-A|x|^{-q}|\,dx\quad\forall
R>0.
\end{equation*}
Hence $\psi$ has initial value $A|x|^{-q}$. By 
Theorem~\ref{existence-self-similar-soln} $\psi$ is the unique
self-similar radially symmetric solution of \eqref{fast-diff-eqn} in
$\R^n\times (0,\infty)$ with initial value $A|x|^{-q}$. Since the sequence 
$\{\gamma_i\}_{i=1}^{\infty}$ is arbitrary, $u_{\gamma}$ converges
uniformly to $\psi$ as $\gamma\to\infty$ on every compact 
subset of $\R^n\times (0,\infty)$. In particular 
\begin{equation}\label{u-gamma-limit}
u_{\gamma}(x,1)=\gamma^qu(\gamma x,\gamma^{\frac{1}{\beta}})\to\psi(x,1)
\end{equation}
uniformly on every compact subset of $\R^n\times (0,\infty)$ as
$\gamma\to\infty$. By \eqref{u-gamma-limit},
\begin{equation*}
v(x,t)=t^{\alpha}u(t^{\beta}x,t)\to\psi(x,1)
\end{equation*}
uniformly on every compact subset of $\R^n\times (0,\infty)$ as
$t\to\infty$ and the theorem follows.
\hfill$\square$\vspace{6pt}

\end{document}